\newtheorem{thm}{Theorem}[section]
\newtheorem{cor}[thm]{Corollary}
\newtheorem{lem}[thm]{Lemma}
\newtheorem{defn}[thm]{Definition}
\newcommand{\F}{\mathbb{F}}
\newcommand{\PG}{\text{PG}}
\newcommand{\AG}{\text{AG}}
\newcommand{\hs}{\hspace{1em}}
\newcommand{\mpr}[3]{m_{\text{Proj}}(#1 ; #2, #3)}
\newcommand{\mpa}[3]{m_{\text{Aff}}(#1 ; #2, #3)}
\author{
Jakob F\"uhrer\thanks{Institute for Algebra, Johannes Kepler University, Linz, Austria. E-mail: {\tt jakob.fuehrer@jku.at}.}
\and
Vladislav Taranchuk \thanks{Department of Mathematics: Analysis, Logic and Discrete Mathematics, Ghent University, 9000 Ghent, Belgium. E-mail: {\tt vlad.taranchuk@ugent.be}.}
}
\title{Large line-free sets and their applications}
\begin{document}

\date{\today}
\maketitle

\begin{abstract}
In this paper, we give an explicit construction of polynomials \( P \in \mathbb{F}_q[x_1,\dots,x_n] \) whose zero sets are large subsets of $\F_q^n$ which have restricted intersections with affine lines. We use these sets to make substantial progress on a number of problems in extremal combinatorics.

For each prime power \( q \) and integer \( 2 \le t \le q-1 \), we construct \( t \)-line evasive subsets of \( \mathbb{F}_q^n \) of size
\[
q^{\,n\left(1-\frac{2}{t^2+t}\right)},
\]
which is significantly larger than those previously known. Moreover, our method yields a partition of \( \mathbb{F}_q^n \) into such sets.

We extend this partitioning result to the projective space \( \PG(n,q) \), obtaining the first explicit colorings for the vector space Ramsey number \( R_q(2;k) \) that exhibit dependence on both \( q \) and \( k \). In particular, we show that
\[
R_q(2;k) > \frac{(q-1)k}{2} - O_q(1),
\]
improving recent bounds.

Finally, we apply these constructions to extremal graph theory and improve the best-known bounds on the bipartite Tur\'an number \( \mathrm{ex}(n,m,\{C_4,\theta_{3,t}\}) \). Most notably, we show that
\[
\mathrm{ex}(n,n^{2/3},\{C_4,\theta_{3,3}\}) = \Theta(n^{1+1/9}),
\]
making progress on a question originally posed by Erd\H{o}s.
\end{abstract}

\section{Introduction}

Let $q$ be a prime power and denote by $\F_q$ the finite field of order $q$. We denote the affine space of dimension $n$ over $\F_q$ by $\AG(n, q)$ and by $\F_q^n$ interchangeably. Denote the $n$-dimensional projective space over $\F_q$ by PG$(n, q)$.

A  $(t, s)$-set $K$ is a subset of points of PG$(n, q)$ such that any $s$-space of PG$(n, q)$ intersects $K$ in at most $t$ points. The study of such sets has historically been, and still is, a very active research area. Furthermore, large $(t, s)$-sets have important applications in coding theory and beyond \cite{HirschfeldStorme2001} for various sets of parameters $t$ and $s$. The following is a list of some traditionally well-studied parameters and the names associated with the corresponding subset of points.

\begin{enumerate}
    \item A $(2, 1)$-set $K$ is called a \textit{cap} in PG$(n, q)$, i.e., any line of PG$(n, q)$ intersects $K$ in at most 2 points.
    \item A $(t, t-1)$-set $K$ is referred to as a \textit{$t$-general set}, or a \textit{generalized cap}.
    \item An $(n, n-1)$-set in PG$(n, q)$ is called an \textit{arc}.
\end{enumerate}

More recently, there has been a significant amount of research into the analog of a $(t, s)$-set in the affine space AG$(n, q)$, called an $(s, t)$-subspace evasive set \cite{PudlakRodl2004}. To be precise, an $(s, t)$-subspace evasive set in AG$(n, q)$ is a set of points $K$ such that any $s$-dimensional flat of AG$(n, q)$ intersects $K$ in at most $t$ points. If $t = 1$, then $S$ is instead called $t$-line evasive. When $s = 2$ alongside $t=1$, then $S$ is called a cap set in AG$(n, q)$. Observe that since AG$(n, q)$ embeds into the projective space PG$(n, q)$ as a subgeometry, any $(s, t)$-subspace evasive set in AG$(n, q)$ may be embedded as a $(t, s)$-set in PG$(n, q)$. The usual line of inquiry regarding $(t, s)$-sets and $(s, t)$-subspace evasive sets is to determine their maximum size. In this paper, we are particularly interested in the case that $s = 1$. Denote by $\mpr{t}{n}{q}$ and $\mpa{t}{n}{q}$ the maximum size of a $(t, 1)$-set in $\PG(n, q)$ and $t$-line evasive set in $\AG(n, q)$ respectively.

One of the most notorious open problems in finite geometry is to determine the order of growth of the largest possible cap in PG$(n, q)$, in other words, determining the order of magnitude of $\mpr{2}{n}{q}$. The current best-known general bounds state \cite{HirschfeldStorme2001}
\begin{equation}
    q^{2n/3} + o(q^{2n/3}) <  \mpr{2}{n}{q} < q^{n-1} - o(q^{n-1})
\end{equation}
which is quite a large gap as $n$ increases. 

For certain values of $q$, this bound has been improved. For example, when $q = 3$, determining $\mpa{2}{n}{3}$ asks precisely for the largest subset of $\F_3^n$ which intersects each affine line in at most 2 points, and so we recover the legendary cap set problem.
After a series of improvements (see e.g. \cite{Edel}, \cite{Tyrrell}), the best known lower bound $\mpa{2}{n}{3} \gg 2.220^n$ was given by Romera-Paredes et al.~\cite{DeepMind}.
The first and only improvement to the exponential part of the trivial upper bound of $3^n$ is due to Ellenberg and Gijswijt \cite{EG}, who gave $\mpa{2}{n}{3} \ll2.756^n$.

In this paper, one of our primary concerns is the size of the  largest $t$-line evasive set in AG$(n, q)$ and hence the largest $(t, 1)$-set in $\PG(n, q)$. We are not the first to study this problem. Indeed, for a fixed $t$ and $q$, Lin and Wolf \cite{LW} proved that 
\begin{equation}
   \mpa{t}{n}{q}=  \Omega\left(q^{n\left(1 - \frac{1}{t+1}\right)}\right).
\end{equation}
In this paper, we generalize the construction of Lin and Wolf to significantly improve this lower bound significantly. 

\begin{thm}
\label{mainthm}
    Let $q$ be a prime power, and $t < q$. Then 
    $$
    \mpa{t}{n}{q} = \Omega\left(q^{n\left(1  - \frac{2}{t^2 + t}\right)}\right).
    $$
\end{thm}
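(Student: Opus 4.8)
The plan is to reduce the theorem to the construction of a single good ``block'' in a space of dimension $D := \binom{t+1}{2} = (t^2+t)/2$, and then tensor it up. Concretely, I would first isolate the following elementary product lemma: if $S \subseteq \F_q^{D}$ meets every affine line of $\AG(D,q)$ in at most $t$ points, then for $n = mD$ the product $K = S^{m} \subseteq \F_q^{n}$ is $t$-line evasive. The proof is a one-line projection argument. A line $\ell(\lambda) = a + \lambda b$ with $b \ne 0$ has $b_{j_0} \ne 0$ in some block $j_0$, so its projection to that block is a genuine line $\ell_{j_0}$, and every point of $K$ lying on $\ell$ projects into $S \cap \ell_{j_0}$; hence $|K \cap \ell| \le |S \cap \ell_{j_0}| \le t$. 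Taking $|S| \ge q^{D-1}$ then gives $|K| = |S|^{m} \ge q^{(D-1)m} = q^{n(1 - 1/D)} = q^{n(1 - 2/(t^2+t))}$, and a padding argument (set $K = S^{m}\times\{pt\}$, which is still $t$-line evasive) handles $n$ not divisible by $D$ at the cost of a constant depending only on $q,t$, which the $\Omega$ absorbs. Everything thus reduces to the base case.

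For the base case I would seek $S$ as the zero set $Z(g)$ of an explicit polynomial $g \in \F_q[x_1,\dots,x_D]$ of degree exactly $t$. Fixing the degree to $t$ is the crucial simplification: the restriction of $g$ to any affine line is a univariate polynomial of degree at most $t$, so, since $t < q$, it either has at most $t$ roots in $\F_q$ or vanishes identically, the latter occurring exactly when the line lies inside $Z(g)$. Consequently the requirement ``$|Z(g) \cap \ell| \le t$ for every line $\ell$'' is equivalent to the single clean condition that $Z(g)$ contains no $\F_q$-rational affine line. For $t = 2$ this recovers the classical picture: an affine elliptic quadric (the affine part of an ovoid of $\PG(3,q)$) is a degree-$2$ hypersurface in $\F_q^{3} = \F_q^{\binom{3}{2}}$ with $q^{2}$ points and no three collinear points, and the product lemma reproduces the $q^{2n/3}$ cap bound. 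The task is therefore to build the higher-degree analogue: a degree-$t$ hypersurface in $\binom{t+1}{2}$ affine variables that contains no $\F_q$-line and carries at least $q^{D-1}$ rational points.

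The hard part is precisely this line-free construction, and the essential difficulty is that one cannot eliminate lines using the leading form alone. A line $a + \lambda b \subseteq Z(g)$ forces the degree-$t$ part $g_t$ of $g$ to vanish at the direction $b \ne 0$, so if $g_t$ were anisotropic no line could survive; but $\F_q$ is a $C_1$ field, and by Chevalley--Warning an anisotropic form of degree $t$ admits at most $t$ variables, far fewer than $\binom{t+1}{2}$. The construction must therefore permit $g_t$ to be isotropic and instead exploit the lower-order terms of $g$ to destroy every isotropic direction: one wants an explicit $g$ (generalizing the norm-form description of the elliptic quadric, e.g.\ assembled from norm forms of extensions $\F_{q^k}/\F_q$ across blocks whose sizes $1,2,\dots,t$ sum to $\binom{t+1}{2}$) for which the existence of a line in $Z(g)$ would force a nontrivial $\F_q$-zero of an auxiliary anisotropic form, a contradiction. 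I expect the verification of this no-line property, together with the simultaneous point-count $|Z(g)| \ge q^{D-1}$ (to be obtained either by an exact count for the chosen $g$ or by a Lang--Weil / Chevalley--Warning style estimate), to be the main technical obstacle, and to be exactly what pushes the admissible dimension up from the value $t+1$ of Lin and Wolf to the optimal $\binom{t+1}{2}$.
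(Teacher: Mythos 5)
Your architecture is sound, and in fact your guess at the base construction is exactly the paper's: the paper identifies $\F_q^{\binom{t+1}{2}}$ with $V = \F_q \times \F_{q^2} \times \cdots \times \F_{q^t}$ and takes $P_{t,q}(v) = v_1 + N_2(v_2) + \cdots + N_t(v_t)$, i.e.\ norm forms of $\F_{q^i}/\F_q$ on blocks of sizes $1,2,\dots,t$ summing to $\binom{t+1}{2}$, working with the level sets $S_u = \{v : P_{t,q}(v) = u\}$. The one step you leave open --- the no-line property, which you flag as ``the main technical obstacle,'' possibly needing Lang--Weil --- is the only genuine gap in your writeup, and you substantially overestimate it: it closes in two lines from the block structure you yourself propose. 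Given a line $\lambda \mapsto a\lambda + b$ with $a \neq 0$, let $i$ be the largest index with $a_i \neq 0$. Blocks above $i$ contribute constants $N_j(b_j)$; block $i$ contributes $N_i(a_i\lambda + b_i)$, whose $\lambda^i$-coefficient is $N_i(a_i) \neq 0$ because the norm form of a field extension is anisotropic; blocks below $i$ contribute degree at most $i-1$. So the restriction of $P_{t,q} - u$ to the line has degree exactly $i$ with $1 \le i \le t < q$, hence is not identically zero and has at most $t$ roots --- this gives $|\ell \cap S_u| \le t$ directly, with no need for your auxiliary-anisotropic-form contradiction as a separate mechanism (it \emph{is} the mechanism, with $N_i$ at the top nonzero block playing the role of the anisotropic form). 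Likewise no point-count estimate is needed: $v_1$ enters linearly, so each choice of $(v_2,\dots,v_t)$ determines a unique $v_1$, giving $|S_u| = q^{\binom{t+1}{2}-1}$ exactly, and in fact a partition of $V$ into $q$ such sets.

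Where you genuinely differ from the paper is the lift to general $n$. You tensor the block ($K = S^m$, with your projection argument, plus padding, losing a bounded factor at most $q^{D-1}$ where $D=\binom{t+1}{2}$); your product lemma is correct as stated, since a line with direction nonzero in block $j_0$ projects bijectively onto a line of that block. The paper instead runs the same construction over $\F_{q^k}$ with $k \approx n/D$ and applies field reduction: every $\F_q$-line in $\F_q^{Dk} \cong \F_{q^k}^D$ sits inside an $\F_{q^k}$-line, so the $q^k$ level sets remain $t$-line evasive over $\F_q$, partition the space, and one of them meets the subspace $\AG(n,q)$ in at least $q^{n-k}$ points. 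Both routes prove the stated $\Omega$ bound; yours is more elementary, while the paper's yields as a byproduct the partition into few $t$-line evasive classes (with at most a factor-$q$ loss rather than $q^{D-1}$), which is what powers the chromatic-number and Ramsey applications later in the paper. So: correct reduction, correct guessed polynomial, but the central verification was left unproven --- supply the leading-coefficient argument above and your proof is complete.
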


Furthermore, we demonstrate that our constructions are maximal, and in fact, the proof of Theorem \ref{mainthm} gives a partition of $\F_q^n$ into maximal $t$-line evasive sets each of size $q^{n\left(1  - \frac{2}{t^2 + t}\right)}$ when $\binom{t+1}{2}|n$. Furthermore, we are able to extend our results to the projective space $\PG(n, q)$, which has a number of significant applications. Below, we survey each application and state our results.

\subsection{Vector Space Ramsey Numbers}

The vector space Ramsey number $R_q(s; k)$ is the smallest dimension $n$ such that any coloring of the 1-dimensional subspaces of $\F_q^n$ with $k$ colors yields a monochromatic $s$-space. Equivalently, it is the smallest integer $n$, such that any coloring of the points of $\PG(n-1, q)$ contains a monochromatic $(s-1)$-projective subspace. In particular, $R_q(2; k)$ is the smallest integer $n$ such that any $k$-coloring of the lines of $\PG(n-1, q)$ contains a monochromatic line. 

The study of the function $R_q(s; k)$ was first initiated by Graham, Leeb, and Rothschild \cite{GrahamLeebRothschild1972} where they proved that $R_q(s; k)$ is finite. Certain sets of parameters have received special attention, and some even overlap with historically significant problems in finite geometry. For example, $R_q(s; 2)$ can be translated as the smallest integer $n$, such that any blocking set for projective $(s - 1)$-spaces in $\PG(n-1, q)$ must contain an $(s-1)$-space. 

From here on, we will focus specifically on the Ramsey number $R_q(2; k)$. The best-known general upper bound for $R_q(2; k)$ to date is still very large, a tower function dependent on $q$ and $k$ \cite{FredericksonYepremyan2023}. For $q = 2$ or $3$, much stronger bounds are known. For $q = 2$, after a series of improvements, the most recent of which appears in \cite{BishnoiCamesRavi2025}, we have that 
$$
R_2(2; k) = O(k \log k).
$$
For $q = 3$, it is shown in \cite{FredericksonYepremyan2023} that
$$
R_3(2; k) = O(kC^k),
$$
where $C \approx 13.901.$

The lower bounds have also seen improvement very recently. In \cite{FredericksonYepremyan2025}, it is stated that a standard probabilistic argument yields that 
$$
R_q(2; k) = \Omega\left(\frac{q^2 \log_qk}{2}\right).
$$
This bound is improved upon by Bishnoi, Cames van Batenburg, and Ravi \cite{BishnoiCamesRavi2025} where the authors study the chromatic number of projective spaces, which has a direct relation with the Ramsey number $R_q(2; k)$. The chromatic number of the projective space $\PG(n-1, q)$, denoted $\chi_q(n)$, is the smallest number of colors required to color the points of PG$(n-1, q)$ so that no line is entirely contained in any one color class. From this definition, it follows that 
\begin{equation}\label{Equiv}
R_q(2;k) > n \iff   \chi_q(n) \leq k.
\end{equation}

The authors prove a lemma (\cite[Lemma 1]{BishnoiCamesRavi2025}) which establishes a recursive upper bound for $\chi_q(n)$, stating that for any $d < n$, 
\begin{equation}\label{Recursion}
    \chi_q(n) \leq \chi_q(n-d) + \chi_q(d).
\end{equation}
Using this recursion, they obtain the following result.
\begin{thm}{\cite[Theorems 1 and 3]{BishnoiCamesRavi2025}}
    For every integer $n \geq 2$, the following holds:
    \begin{itemize}
        \item When $q = 2$, 
    $$
    \chi_2(n) \leq \left\lfloor \frac{2n}{3}\right\rfloor + 1.
    $$
        \item When $q = 3$ or 4,
    $$
    \chi_q(n) \leq \frac{3n}{5} + O(1).
    $$
        \item For all fixed prime powers $q \geq  5$, 
    $$
    \chi_q(n) \leq \frac{n}{2} +O(1).
    $$
    \end{itemize}
\end{thm}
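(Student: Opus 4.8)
The engine is the recursion \eqref{Recursion}, which exhibits $\chi_q$ as a subadditive function of the dimension. The plan is to exploit this in two stages: first establish a handful of base cases in small dimension by explicit finite-geometric colorings, and then let \eqref{Recursion} propagate them to all $n$. Concretely, if one produces a single dimension $d_0$ with $\chi_q(d_0) \le c\, d_0$, then writing $n = a d_0 + r$ with $0 \le r < d_0$ and peeling off blocks of size $d_0$ gives
$$\chi_q(n) \le a\,\chi_q(d_0) + \chi_q(r) \le c\,n + \max_{0 \le r < d_0}\chi_q(r) = c\,n + O_q(1),$$
so the achievable slope is governed by the best ratio $\chi_q(d_0)/d_0$ among base dimensions. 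The three regimes correspond to three target ratios: $c = 1/2$ for $q \ge 5$, $c = 3/5$ for $q \in \{3,4\}$, and $c = 2/3$ for $q = 2$.

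The substance of the argument is therefore the construction of the base colorings, and this is where the three cases genuinely differ. The key reformulation is that a color class is admissible precisely when it is \emph{line-free} (contains no full line), equivalently when its complement blocks every line; in particular a two-coloring with no monochromatic line is the same thing as a single set that simultaneously blocks every line and contains none, since the complement then inherits both properties automatically. For $q \ge 5$ I would aim to two-color $\PG(3,q)$, giving the ratio $1/2$ via $d_0 = 4$: the goal is to build one such blocking, line-free set, using that lines now carry $q+1 \ge 6$ points so that a judiciously chosen quadric- or coset-based set can meet every line while omitting a point of each. For $q \in \{3,4\}$ the ratio $1/2$ is no longer attainable this way, and I would instead establish $\chi_q(5) \le 3$, a three-coloring of $\PG(4,q)$ attaining the ratio $3/5$. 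For $q = 2$ the Fano plane already forces $\chi_2(3) = 3$, so no ratio below $1$ is available in dimension $3$; here I would locate a base dimension $d_0$ divisible by $3$ with $\chi_2(d_0) \le \tfrac{2}{3}d_0$ (e.g. a four-coloring of $\PG(5,2)$), together with the directly verified small values $\chi_2(2)=2$, $\chi_2(3)=3$, $\chi_2(4)\le 3$.

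Passing to the stated forms is routine for $q \ge 3$: the displayed iteration already yields $c\,n + O(1)$. For the exact bound $\chi_2(n) \le \lfloor 2n/3\rfloor + 1$ I would run a strong induction on $n$ using \eqref{Recursion} with the chosen $d_0$. Writing $f(n) = \lfloor 2n/3\rfloor + 1$ and using $3 \mid d_0$, one has the exact identity $f(n - d_0) = f(n) - \tfrac{2}{3}d_0$, so
$$\chi_2(n) \le \chi_2(n - d_0) + \chi_2(d_0) \le f(n - d_0) + \tfrac{2}{3}d_0 = f(n),$$
and the induction closes once the finitely many residue base cases $2 \le n \le d_0 + 1$ are checked directly against $f$.

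The main obstacle is squarely the base-case constructions — the finite-geometry input that the recursion cannot supply. For $q \ge 5$ the crux is exhibiting a line-free line-blocking set in $\PG(3,q)$ (equivalently, that $\PG(3,q)$ is two-chromatic), and for $q \in \{3,4\}$ the analogous three-coloring of $\PG(4,q)$; for $q = 2$ one must pin down the small values exactly, including a genuine dimension-$d_0$ coloring meeting the $2/3$ ratio while respecting the Fano obstruction. A secondary, purely bookkeeping difficulty is arranging the $q=2$ induction so that the floor-plus-one shape is preserved exactly rather than merely up to an additive constant, which is what the divisibility $3 \mid d_0$ and the residue checks in the base accomplish.
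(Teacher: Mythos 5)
First, a point of orientation: this theorem is not proved in the present paper at all --- it is quoted from \cite[Theorems 1 and 3]{BishnoiCamesRavi2025} as background, so there is no in-paper proof to compare against; the right benchmark is the cited source, whose method (the subadditive recursion (\ref{Recursion}) fed by explicit small-dimensional colorings) is also the template Section 4 of this paper follows for its own bound. Your skeleton --- peel off blocks of a base dimension $d_0$ so that the achievable slope is $\chi_q(d_0)/d_0$, with all the finite-geometric content concentrated in the base colorings --- has the right shape, and your guesses for two of the bases match what is actually done: a two-coloring of $\PG(3,q)$ for $q \ge 5$ (ratio $1/2$) and a three-coloring of $\PG(4,q)$ for $q \in \{3,4\}$ (ratio $3/5$). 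But, as you yourself concede, you construct none of these colorings, and they are the entire substance of the theorem. Exhibiting a line-free set in $\PG(3,q)$ whose complement is also line-free is a genuine construction; note in particular that your suggested quadric-based starting point is delicate, since an elliptic quadric (ovoid) misses every external line and hence fails to block, while a hyperbolic quadric contains lines. Without some such construction the proposal yields nothing beyond the trivial $\chi_q(n) \le n$ obtained from $\chi_q(2) = 2$.

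The deeper problem is the $q = 2$ case, where the proposed mechanism is not merely incomplete but structurally unworkable. Your own induction shows what pure subadditivity demands: to get the exact bound $\chi_2(n) \le \lfloor 2n/3 \rfloor + 1$ from (\ref{Recursion}) you need some $d_0 \equiv 0 \pmod 3$ with $\chi_2(d_0) \le \tfrac{2}{3}d_0$, i.e., a dimension in which you beat the theorem's own bound $\lfloor 2d_0/3 \rfloor + 1$ by one. Your suggested witness, a four-coloring of $\PG(5,2)$ giving $\chi_2(6) \le 4$, is exactly such a strictly stronger, unsubstantiated claim (the theorem only gives $\chi_2(6) \le 5$, and nothing in your sketch produces the improvement). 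If instead every available base satisfies only $\chi_2(d) \le \tfrac{2}{3}d + 1$, then each application of (\ref{Recursion}) leaks an additive $+1$, and iterating with block size $d$ yields slope $\tfrac{2}{3} + \tfrac{1}{d}$ --- approaching but never attaining the floor-plus-one form. This is why the cited paper does not derive the $q=2$ bound from (\ref{Recursion}) with a magic base: one needs an effectively stronger step of the form $\chi_2(n) \le \chi_2(n-3) + 2$, which is not an instance of (\ref{Recursion}) since $\chi_2(3) = 3$ (the Fano plane), and which is obtained by a direct construction --- in the sum-free-set reformulation of the $q=2$ problem, a layered coloring of $\F_2^n \setminus \{0\}$ stratified by blocks of three coordinates, two-coloring each layer by a gadget adapted to its mixed affine-projective structure. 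So the missing idea is concrete: for $q=2$, plain subadditivity from any plausibly-in-reach base cannot deliver the stated exact bound, and a strengthened recursive step (or equivalent direct construction) is required.
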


From (\ref{Equiv}) and the theorem above, the authors immediately obtain the following corollary.

\begin{cor}{\cite[Lemma 3]{BishnoiCamesRavi2025}}
    For every integer $k \geq 1$, the following holds: 
    \begin{itemize}
        \item If $q = 2$, 
        $$
        R_2(2; k) \geq \frac{3}{2}k - O(1).
        $$
        \item For $q = 3$ and 4,
    $$
    R_q(2; k) \geq \frac{5k}{3} - O(1).
    $$
        \item For all fixed prime powers $q \geq  5$, 
    $$
    R_q(2; k) \geq  2k  - O(1).
    $$
    \end{itemize}
\end{cor}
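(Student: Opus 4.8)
The plan is to read the corollary off from the preceding theorem by inverting each chromatic-number bound through the equivalence (\ref{Equiv}). Recall that (\ref{Equiv}) asserts $R_q(2;k) > n \iff \chi_q(n) \le k$. Consequently, for each fixed $q$ it suffices to exhibit, for every $k$, an integer $n = n(k)$ as large as possible for which the theorem's upper bound on $\chi_q(n)$ already falls at or below $k$: once $\chi_q(n) \le k$ we get $R_q(2;k) > n$, and so the desired lower bound is $R_q(2;k) \ge n(k)+1$.

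Carrying this out case by case: for $q = 2$ I would start from $\chi_2(n) \le \lfloor 2n/3 \rfloor + 1$ and seek the largest $n$ with $\lfloor 2n/3\rfloor + 1 \le k$. Since dropping the floor only strengthens the requirement, it is enough that $2n/3 \le k - 1$, i.e.\ $n \le \tfrac{3}{2}(k-1)$; taking $n = \lfloor \tfrac{3}{2}(k-1)\rfloor$ gives $\chi_2(n) \le k$ and hence $R_2(2;k) > n \ge \tfrac{3}{2}k - O(1)$. The remaining two cases are identical in spirit: for $q \in \{3,4\}$ I invert $\tfrac{3n}{5} + O(1) \le k$ to obtain an admissible $n \ge \tfrac{5}{3}k - O(1)$, yielding $R_q(2;k) \ge \tfrac{5k}{3} - O(1)$; and for all fixed $q \ge 5$ I invert $\tfrac{n}{2} + O(1) \le k$ to obtain $n \ge 2k - O(1)$, yielding $R_q(2;k) \ge 2k - O(1)$.

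There is no substantial obstacle here: the statement is a direct logical consequence of the theorem together with the definition of $R_q(2;k)$ encoded in (\ref{Equiv}). The only point requiring mild care is the bookkeeping — one must check that the floor function in the $q=2$ bound and the unspecified additive constants in the $q \ge 3$ bounds are genuinely absorbed into the final $O(1)$ terms, and that the direction of the implication is oriented correctly, since it is precisely an \emph{upper} bound on $\chi_q(n)$ that converts, through the biconditional, into a \emph{lower} bound on $R_q(2;k)$. Because the $O(1)$ notation already hides all such constants, the inversion is immediate once the direction of (\ref{Equiv}) is pinned down.
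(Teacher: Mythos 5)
Your proposal is correct and matches the paper exactly: the paper derives this corollary immediately from the equivalence (\ref{Equiv}), $R_q(2;k) > n \iff \chi_q(n) \le k$, applied to the chromatic-number upper bounds in the preceding theorem, which is precisely your inversion argument. Your case-by-case bookkeeping of the floor and the $O(1)$ constants is the same routine absorption the paper leaves implicit.
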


In \cite[Section 6]{BishnoiCamesRavi2025}, the authors pose the question of determining the asymptotic growth of $\chi_q(n)$ and suggest that a first step in this direction is to establish an upper bound which is dependent on $q$. We are able to establish precisely such a bound by providing an explicit coloring which proves that $\chi_q(\binom{q}{2} + 1) \leq q$ (see Section 4). In tandem with the established recursion formula (\ref{Recursion}), we obtain improved bounds on $\chi_q(n)$ and consequently on $R_q(2; k)$. Furthermore, in the regime where $n < \binom{q/2}{2}$, we obtain a stronger bound (which is particularly notable for large $q$).

\begin{thm}\label{ChromNum}
    For all fixed prime powers $q$,
    $$
    \chi_q(n) < \frac{2n}{q-1} + q.
    $$
    When $n \leq \binom{q/2}{2}$, we also have that 
    $$
    \chi_q(n) \leq \sqrt{8n} + 4.
    $$
\end{thm}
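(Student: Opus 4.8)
The plan is to establish the two bounds in Theorem~\ref{ChromNum} separately, with both resting on the key coloring fact that $\chi_q\!\left(\binom{q}{2}+1\right) \le q$ promised in Section~4. The cleanest way to organize the argument is to first prove a ``base case'' coloring, then iterate the recursion~(\ref{Recursion}) of Bishnoi, Cames van Batenburg, and Ravi.

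For the first inequality, I would set $d = \binom{q}{2}+1 = \tfrac{q^2-q+2}{2}$ and write $n = md + r$ with $0 \le r < d$. Applying~(\ref{Recursion}) repeatedly (that is, $\chi_q(n) \le \chi_q(n-d) + \chi_q(d)$ peeled off $m$ times) gives
\[
\chi_q(n) \le m\,\chi_q(d) + \chi_q(r) \le m\,q + \chi_q(r).
\]
Since $d \approx q^2/2$, we have $m \approx 2n/q^2$, so $m q \approx 2n/q$; one then checks that the leading term is $\tfrac{2n}{q-1}$ rather than $\tfrac{2n}{q}$ because $d = \binom{q}{2}+1$ and the per-block cost is $q$, giving a ratio $q/d = q/\binom{q}{2} \cdot (1+o(1)) = \tfrac{2}{q-1}(1+o(1))$. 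The trailing term $\chi_q(r)$ is bounded by a constant depending only on $q$ (crudely, $\chi_q(r) \le r+1 \le d$, or one uses the trivial bound), which absorbs into the additive $+q$. I would be careful to track the constant so that the stated clean form $\tfrac{2n}{q-1}+q$ actually holds, perhaps choosing the block size as exactly $\binom{q}{2}$ worth of dimension and accounting for the $+1$ in the remainder.

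For the second inequality, valid in the small-dimension regime $n \le \binom{q/2}{2}$, the point is that when $n$ is much smaller than $\binom{q}{2}$ one should not use the full field of $q$ colors but rather tune the block size to $n$ itself. I would choose an auxiliary parameter $s$ so that $\binom{s}{2} \approx n$, i.e. $s \approx \sqrt{2n}$, and use the coloring construction restricted to give $\chi_q\!\left(\binom{s}{2}+1\right) \le s$ for $s \le q$ (the constraint $n \le \binom{q/2}{2}$ is exactly what guarantees the required $s$ satisfies $s \le q$). Solving $\binom{s}{2}+1 \ge n$ for the smallest integer $s$ yields $s \le \sqrt{2n} + O(1)$, and then the additive constants are collected to reach the claimed $\sqrt{8n}+4$. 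The factor $\sqrt{8}$ versus $\sqrt{2}$ arises from the $q-1$ versus $q$ bookkeeping together with the covering of the remainder, so I would verify that the specific constants $8$ and $4$ come out correctly.

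The main obstacle I anticipate is entirely in the bookkeeping of additive and multiplicative constants rather than in any conceptual difficulty: the recursion is simple and the base coloring is assumed, but converting $\chi_q(\binom{q}{2}+1)\le q$ into a clean closed form with the precise constants $\tfrac{2}{q-1}$, $+q$, $\sqrt{8}$, and $+4$ requires careful handling of the floor in $m=\lfloor n/d\rfloor$, the remainder term $\chi_q(r)$, and the transition point $n \le \binom{q/2}{2}$ that separates the two regimes. In particular I would double-check that the small-$n$ bound genuinely improves on the general bound precisely when $n \le \binom{q/2}{2}$, since that is where choosing $s < q$ colors beats using all $q$.
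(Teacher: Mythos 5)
Your first inequality is handled correctly and by essentially the same route as the paper: iterate the recursion~(\ref{Recursion}) with block size $d=\binom{q}{2}+1$, use the base fact $\chi_q(d)\le q$ from Lemma~\ref{lem:chromatic}, bound the remainder term by $q$, and note $q/d< q/\binom{q}{2}=\frac{2}{q-1}$, which yields $\chi_q(n)<\frac{2n}{q-1}+q$.

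The second inequality, however, has a genuine gap. Your argument rests on the base fact $\chi_q\bigl(\binom{s}{2}+1\bigr)\le s$ for $s\le q$, i.e.\ an $s$-coloring of $\PG\bigl(\binom{s}{2},q\bigr)$ with no monochromatic line. The construction does not provide this: the level sets of $P_{s-1,q}$ partition $\AG\bigl(\binom{s}{2},q\bigr)$ into $q$ classes --- one for each value $u\in\F_q$ --- not $s$ classes, and each class is only $(s-1)$-line evasive rather than line-free with few colors. In fact, if your claimed base fact were available, the same bookkeeping would prove $\chi_q(n)\le\sqrt{2n}+O(1)$, strictly stronger than the theorem; the $\sqrt{2}$-versus-$\sqrt{8}$ discrepancy you flagged is not an artifact of constant-tracking but the symptom of a missing idea. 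What the paper actually does in this regime is keep all $q$ colors but exploit the \emph{evasiveness}: choose $s$ with $\binom{s-1}{2}<n\le\binom{s}{2}$ (so $s\le q/2$), color $\AG(n,q)$ with $q$ colors so that every affine line has at most $s-1$ points of any color, extend to $\PG(n,q)$ via the hyperplane-recursion argument of Lemma~\ref{lem:chromatic} so every projective line has at most $s$ points per color, and then \emph{merge} $j$ color classes at a time, where $j$ is the largest integer with $js<q+1$. Each merged class meets every line in at most $js\le q$ of its $q+1$ points, hence contains no full line, and the number of merged classes is $\lceil q/j\rceil\le\frac{qs}{q+1-s}+1$.

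This also corrects your reading of the threshold: the hypothesis $n\le\binom{q/2}{2}$ is not there to guarantee $s\le q$ (that would only need $n\le\binom{q}{2}$), but to guarantee $s\le q/2$, which is precisely what makes $\frac{qs}{q+1-s}\le 2s$, so the merging step loses only a factor of $2$; combined with $s\le\frac{\sqrt{8n+1}+3}{2}$ this gives $\chi_q(n)\le 2s+1\le\sqrt{8n}+4$. Note that for $s$ close to $q$ the merging gains nothing ($j=1$ keeps all $q$ colors), which is why the improved bound is genuinely confined to the small-$n$ regime. Without the merging mechanism, or some alternative construction actually realizing an $s$-coloring, your proof of the second inequality does not go through.
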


This immediately implies the following corollary:

\begin{cor}
    For all fixed prime powers $q$ and integers $k \geq q$,
    $$
    R_q(2; k) \geq \frac{q-1}{2}k - O_q(1).
    $$
    When $k \leq q$, we also have that 
    $$
    R_q(2; k) > \frac{(k - 4)^2}{8}.
    $$
\end{cor}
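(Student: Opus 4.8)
The plan is to derive the corollary directly from Theorem~\ref{ChromNum} using the equivalence~(\ref{Equiv}), which states that $R_q(2;k) > n \iff \chi_q(n) \leq k$. The strategy is to take the chromatic bounds $\chi_q(n) \leq f(n)$ established in the theorem, invert them to solve for the largest $n$ satisfying $f(n) \leq k$, and then conclude that $R_q(2;k)$ exceeds that value of $n$.

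For the first bound, I would start from $\chi_q(n) < \frac{2n}{q-1} + q$. I want the largest $n$ for which this guarantees $\chi_q(n) \leq k$, so I would set $\frac{2n}{q-1} + q \leq k$ and solve for $n$, obtaining $n \leq \frac{(q-1)(k-q)}{2} = \frac{q-1}{2}k - \frac{q(q-1)}{2}$. Since the second term is a constant depending only on $q$, for any such $n$ we have $\chi_q(n) \leq k$, hence $R_q(2;k) > n$; taking $n$ as large as permitted yields $R_q(2;k) > \frac{q-1}{2}k - O_q(1)$. A minor point to handle carefully is the strict-versus-weak inequality in the theorem statement and the integrality of $n$, but since we are only claiming a bound up to an additive $O_q(1)$ error, rounding $n$ down to the nearest integer is absorbed into that error term.

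For the second bound, valid in the regime $k \leq q$, I would use the sharper estimate $\chi_q(n) \leq \sqrt{8n} + 4$, which holds when $n \leq \binom{q/2}{2}$. Setting $\sqrt{8n} + 4 \leq k$ and solving gives $\sqrt{8n} \leq k - 4$, i.e. $n \leq \frac{(k-4)^2}{8}$. For such $n$ we again have $\chi_q(n) \leq k$ and therefore $R_q(2;k) > n$, yielding $R_q(2;k) > \frac{(k-4)^2}{8}$. Here I would need to verify that the chosen value of $n$ stays within the admissibility range $n \leq \binom{q/2}{2}$ required for the $\sqrt{8n}+4$ bound to apply; since we are in the regime $k \leq q$, one checks that $\frac{(k-4)^2}{8}$ does not exceed $\binom{q/2}{2}$, so the sharper theorem bound is indeed available.

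The argument is essentially a mechanical inversion of the two inequalities in Theorem~\ref{ChromNum}, so I do not anticipate a genuine obstacle. The only place demanding a little care is confirming the range condition $n \leq \binom{q/2}{2}$ in the second case, ensuring the value of $n$ we plug in is legitimately covered by the stronger chromatic estimate rather than only by the weaker linear one.
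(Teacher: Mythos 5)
Your proposal is correct and matches the paper's own (implicit) derivation: the paper treats this corollary as an immediate consequence of Theorem~\ref{ChromNum} via the equivalence $R_q(2;k) > n \iff \chi_q(n) \leq k$, inverting each chromatic bound exactly as you do. Your range check for the second bound is also sound, since for $k \leq q$ one has $\frac{(k-4)^2}{8} \leq \frac{(q-4)^2}{8} \leq \frac{q(q-2)}{8} = \binom{q/2}{2}$.
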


\subsection{Tur\'{a}n numbers}

Finally, we observe an application of our construction to a problem in extremal graph theory first posed by Erd\'{o}s \cite{Erdos1979}.

Let $m, n$ be positive integers and $\mathcal{F}$ be a family of graphs. The bipartite Tur\'{a}n number $ex(n, m, \mathcal{F})$, is the maximum number of edges in a bipartite graph whose part sizes are $m$ and $n$, such that it contains no graph in $\mathcal{F}$ as a subgraph. The function $ex(n, m, \mathcal{F})$ has been studied extensively for many different sets $\mathcal{F}$, but many questions remain. See the well-known survey by F\"{u}redi and Simonovits \cite{FurediSimonovits2013} for a history of the work done on these types of problems.

One of the most notorious cases of determining $ex(n, m, \mathcal{F})$ is when $\mathcal{F} = \{C_{2k}\}$ for some positive integer $k$. When $m = n$, the order of magnitude of $ex(n, n, \{C_{2k} \})$ is known for $k = 2, 3, 5$ \cite{FurediSimonovits2013}, which coincides with the existence of special finite geometries called generalized polygons. When $m = n^a$ with $a < 1$, and we allow $n \rightarrow \infty$, even less is known. The only well-understood case is that of  $ex(n, m, \{C_4\})$, where extremal constructions are precisely those arising from point-block incidence graphs of 2-designs \cite{FurediSimonovits2013}.

In 1979, Erd\H{o}s conjectured that when $m = O(n^{2/3})$, then 
$ex(m, n, \{C_4, C_6 \}) = O(n)$ \cite{Erdos1979}. De Caen and Sz\'{e}kely \cite{deCaenSzekely1997} proved that
$$
cn^{1 + \frac{1}{57} + o(1)} \leq ex(n, n^{2/3}, \{C_4, C_6\}) \leq n^{1 + 1/9} 
$$
for a constant $c$, thereby disproving Erd\H{o}s's conjecture.
Currently, the best-known lower bound for this problem is given by Lazebnik, Ustimenko and Woldar \cite{LazebnikUstimenkoWoldar1995} who constructed an infinite family of graphs which established 
$$
(1-o(1))n^{1 + \frac{1}{15}} \leq ex(n, n^{2/3}, \{C_4, C_6\}).
$$

A common generalization of the even cycle $C_{2k}$, is the graph $\theta_{k, t}$, which is the graph consisting of two vertices joined by $t$ vertex-disjoint paths of length $k$. Observe that the graph $\theta_{k, 2}$ is precisely the cycle $C_{2k}$. In this work, we improve both the upper and lower bounds for $ex(n, m, \{ C_4, \theta_{3, t}\})$.

The best-known upper bounds on $ex(n, m, \{C_4, \theta_{3, t} \})$ follow from Jiang, Ma,
and Yepremyan \cite{JiangMaYepremyan2022} who proved that when only $\theta_{3, t}$ is excluded, then
$$
ex(m, n, \{ \theta_{3, t} \}) \leq 144t^3((mn)^{\frac{2}{3}} + m + n).
$$
For the lower bounds, D\"{u}zg\"{u}n, Riet, and Taranchuk \cite{DuzgunRietTaranchuk2025} proved that $(t, 1)$-sets in PG$(n, q)$ give rise to bipartite graphs which are $C_4$-free and $\theta_{3, t}$-free. They used the $t$-line evasive sets constructed by Lin and Wolf \cite{LW}, alongside the upper bound of Jiang, Ma, and Yepremyan \cite{JiangMaYepremyan2022} to demonstrate that 
$$
ex(n, n^{2/3}, \{C_4, \theta_{3, 4}\}) = \Theta(n^{1 + 1/9}).
$$
More generally their work implies that
\begin{equation}\label{DRT}
ex(n, n^{a}, \{ C_4, \theta_{3, t} \}) = \Theta(n^{(2 + 2a)/3})    
\end{equation}
where  $a = \frac{j+1}{2j - 1}$ and $2 \leq j \leq t+1$.

The $t$-line evasive sets (and hence $(t, 1)$-sets) we construct in this paper are significantly larger than those given by Lin and Wolf \cite{LW}. This allows us to significantly expand the domain of the value $a$ for which an asymptotically tight bound holds for $ex(n, m, \{C_4, \theta_{3, t}\})$, as compared with the domain of $a$ in (\ref{DRT}). Furthermore, we also establish an upper bound on $ex(n, m, \{C_4, \theta_{3, t}\})$ which is stronger than the upper bound given in \cite{JiangMaYepremyan2022} where the authors only exclude $\theta_{3, t}$.

\begin{thm}\label{NoThetas}
    Let $j$ be an integer satisfying $2 \leq j \leq \binom{t + 1}{2}$ and $a = \frac{j+1}{2j - 1}$. Then
    $$
(1 - o(1))n^{(2 + 2a)/3} \leq ex(n, n^a, \{C_4, \theta_{3, t}\}) \leq (\sqrt[3]{t-1}+o(1))n^{(2 + 2a)/3}.
$$
\end{thm}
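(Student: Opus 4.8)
The plan is to prove the two inequalities by completely different means: the lower bound is an explicit construction fed by Theorem~\ref{mainthm}, while the upper bound is a double count of paths of length three.

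For the lower bound I would take the $t$-line evasive set $K\subseteq\AG(r,q)$ of size $q^{\,r(1-1/\binom{t+1}{2})}$ supplied by Theorem~\ref{mainthm} (note $\frac{2}{t^2+t}=1/\binom{t+1}{2}$), regard it as a $(t,1)$-set in $\PG(r,q)$, and build the D\"uzg\"un--Riet--Taranchuk incidence graph, which is $C_4$-free because two points span a unique line and $\theta_{3,t}$-free because a line meets $K$ in at most $t$ points. The free parameters are $q$, the dimension $r$, and which geometric objects form the two colour classes; I would tune them so the part sizes are $n$ and $n^a$ with $a=\frac{j+1}{2j-1}$ and then count incidences to reach $(1-o(1))n^{(2+2a)/3}$ edges, absorbing the divisibility requirement $\binom{t+1}{2}\mid r$ and the rounding between powers of $q$ and $n^a$ into the $(1-o(1))$ factor. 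The single new input relative to~(\ref{DRT}) is that $K$ now has size $q^{\,r(1-1/\binom{t+1}{2})}$ instead of the Lin--Wolf size $q^{\,r(1-1/(t+1))}$; chasing where $|K|$ enters the feasibility of the parameter choice shows the admissible range of $j$ widens from $j\le t+1$ to $j\le\binom{t+1}{2}$, exactly as claimed.

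For the upper bound, let $G$ be bipartite, $C_4$- and $\theta_{3,t}$-free, with parts $A,B$ of sizes $m=n^a$ and $n$ and with $E$ edges. I would run the argument through the number $P_3$ of paths of length three, which satisfies the exact identity $P_3=\sum_{ab\in E}(\deg a-1)(\deg b-1)$ (sum over edges with $a\in A$, $b\in B$). Because the two hubs of a $\theta_{3,t}$ lie in opposite colour classes, $\theta_{3,t}$-freeness means each ordered pair in $A\times B$ is joined by at most $t-1$ such paths, giving the clean bound $P_3\le (t-1)mn$.

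The heart of the matter is the matching lower bound $P_3\ge(1-o(1))\,E^3/(mn)$. Writing $P_3=\sum_{ab\in E}\deg a\,\deg b-\sum_{a}\deg(a)^2-\sum_b\deg(b)^2+E$, convexity bounds the leading sum below by $E^3/(mn)$, with equality exactly in the regular case $\deg a\equiv E/m$, $\deg b\equiv E/n$; the entire difficulty is therefore to show that the two squared-degree corrections are $o(E^3/(mn))$. This is the step I expect to be the main obstacle. The crude estimates coming from $C_4$-freeness alone, $\sum_b\deg(b)^2\le m^2+E$ and $\sum_a\deg(a)^2\le n^2+E$, are not small enough near the balanced end $a=1$ of the range, so one cannot avoid showing that the degree sequence is genuinely close to regular; this is where $C_4$- and $\theta_{3,t}$-freeness must be combined, and it must be done without the constant-factor loss that a crude cleaning or regularisation would incur, since the target constant $\sqrt[3]{t-1}$ is sharp. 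Granting this control, and using that $a>\frac12$ throughout the range (so that $(mn)^{2/3}=n^{(2+2a)/3}\gg m+n$ makes all lower-order terms negligible), the two bounds combine to give $(1-o(1))E^3/(mn)\le(t-1)mn$, that is $E\le(1+o(1))(t-1)^{1/3}(mn)^{2/3}=(\sqrt[3]{t-1}+o(1))\,n^{(2+2a)/3}$, meeting the lower bound up to the factor $\sqrt[3]{t-1}$.
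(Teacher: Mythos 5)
Your lower bound is essentially the paper's own argument: restrict the partition of $\AG(\binom{t+1}{2},q)$ from Theorem~\ref{mainthm} to a subspace $\AG(j,q)$ to get a $t$-line evasive set of size at least $q^{j-1}$ (this is exactly where the constraint $j\le\binom{t+1}{2}$ enters), embed it as a $(t,1)$-set in $\PG(j,q)$, take the linear representation graph with $q^{2j-1}$ line vertices and $q^{j+1}$ point vertices, giving at least $q^{2j}=(1-o(1))(mn)^{2/3}$ edges, and use the prime gaps of Baker--Harman--Pintz to pass from prime powers to general $n$. That part of your proposal is sound and matches the paper.

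The upper bound, however, has a genuine gap, and you have identified its location yourself: you never control $\sum_{x\in A}\deg(x)^2$ (the squared degrees on the small side), you only ``grant'' it. Worse, the route you propose --- proving the degree sequence is close to regular, with no constant-factor loss --- is not what is needed and is not how the paper (following De Caen--Sz\'ekely) proceeds; an extremal graph need not be near-regular, and no such regularity is ever established. The missing idea is a degree truncation: split the small side $M$ into $M_1=\{x:\deg(x)\ge d\}$ and $M_2$ with $d=n^{5/8}/m^{1/4}$, and handle the two parts by different counts. For $G_1$, note that by $C_4$-freeness any two distinct $3$-paths between the same endpoint pair are internally disjoint, so $\theta_{3,t}$-freeness gives at most $(t-1)m_1n$ such paths, while each edge $xy$ with $x\in M_1$ lies on at least $(d-1)(\deg(y)-1)$ of them; Cauchy--Schwarz then yields $|E_1|\le (t-1)n^2/(d^2-d)+n=O(t\,n^{3/4}m^{1/2})$, which is lower order precisely because $a>1/2$. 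For $G_2$, the truncation gives $\sum_{x\in M_2}\deg(x)^2< d^2m_2=n^{5/4}m_2/m^{1/2}=o(mn)$, and the Atkinson--Watterson--Moran inequality $\sigma(B)^3/(m_2n)\le\sigma(BB^TB)$ together with the $C_4$-bound $\sum_{y\in N}\binom{\deg(y)}{2}\le\binom{m_2}{2}$ closes the count with the sharp constant $\sqrt[3]{t-1}$. One further correction: your claim that the crude bound $\sum_{x}\deg(x)^2\le n^2+E$ only fails ``near the balanced end $a=1$'' is wrong --- one needs this sum to be $o(mn)=o(n^{1+a})$, and $n^2\gg n^{1+a}$ for every $a<1$, so the crude bound fails throughout the entire range and the truncation device is indispensable, not a boundary-case fix.
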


This theorem immediately implies the following corollary.

\begin{cor}\label{CorTheta}
    Let $n$ be a positive integer. Then
    $$
    ex(n, n^{2/3}, \{C_4, \theta_{3, 3} \}) = \Theta(n^{1 + 1/9})
    $$
\end{cor}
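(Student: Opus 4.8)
The plan is to obtain this corollary as a direct specialization of Theorem \ref{NoThetas}, so the entire argument reduces to choosing the right parameters and verifying that they fall within the admissible range. Taking $t = 3$, the hypothesis $2 \le j \le \binom{t+1}{2}$ becomes $2 \le j \le \binom{4}{2} = 6$. I would then solve for the value of $j$ that produces the target ratio $a = 2/3$: setting $\frac{j+1}{2j-1} = \frac{2}{3}$ and clearing denominators gives $3(j+1) = 2(2j-1)$, hence $j = 5$. Since $2 \le 5 \le 6$, this choice is legitimate, and so Theorem \ref{NoThetas} applies verbatim with $t = 3$, $j = 5$, $a = 2/3$.

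Next I would compute the exponent appearing in the bounds. With $a = 2/3$ we have $(2 + 2a)/3 = (2 + 4/3)/3 = (10/3)/3 = 10/9 = 1 + 1/9$, which matches the exponent claimed in the corollary. Substituting into Theorem \ref{NoThetas} yields
$$
(1 - o(1))\,n^{1 + 1/9} \le ex(n, n^{2/3}, \{C_4, \theta_{3,3}\}) \le (\sqrt[3]{2} + o(1))\,n^{1 + 1/9},
$$
where the upper constant is $\sqrt[3]{t-1} = \sqrt[3]{2}$.

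Finally, I would observe that although the leading constants differ ($1$ versus $\sqrt[3]{2}$), both the lower and upper bounds are of the form $c\,n^{1+1/9}$ with $c$ bounded away from $0$ and $\infty$. Hence the extremal number is squeezed between two quantities of the same order, giving $ex(n, n^{2/3}, \{C_4, \theta_{3,3}\}) = \Theta(n^{1+1/9})$ as claimed. There is no genuine obstacle in this deduction: the only content is the arithmetic check that $j = 5$ is an integer lying in $[2,6]$ and reproduces $a = 2/3$, which is precisely what makes $t = 3$ a case reachable by the theorem. The substantive work is entirely contained in Theorem \ref{NoThetas} itself, and this corollary simply records the most striking instance, the one tied to Erd\H{o}s's original $\{C_4, C_6\}$ question via $\theta_{3,2} = C_6$ and its generalization to $\theta_{3,3}$.
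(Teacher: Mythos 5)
Your proposal is correct and is exactly the paper's argument: the paper also obtains Corollary \ref{CorTheta} by plugging $t = 3$ and $j = 5$ into Theorem \ref{NoThetas}, and your verification that $j = 5$ lies in $[2, \binom{4}{2}]$, gives $a = \frac{j+1}{2j-1} = \frac{2}{3}$, and yields exponent $(2+2a)/3 = 1 + 1/9$ is the entire content of the deduction. The matching orders of the lower bound $(1-o(1))n^{1+1/9}$ and upper bound $(\sqrt[3]{2}+o(1))n^{1+1/9}$ indeed give the claimed $\Theta(n^{1+1/9})$.
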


\subsection{Outline of paper}

The rest of the paper is organized as follows: In Section 2, we construct a polynomial $P_{t, q}$ in $\binom{t+1}{2}$ variables over $\F_q$ whose set of solutions to $P_{t, q} = u$ is $t$-line evasive for each $u \in \F_q$. Hence, we partition $\F_q^{\binom{t+1}{2}}$ into $q$, $t$-line evasive sets. In Section 3, we prove that each such set is in fact a maximal $t$-line evasive set over any subfield of $\F_q$. In Section 4, we demonstrate that the partitioning result can be extended to the projective space, giving improved bounds on the vector space Ramsey numbers.  In Section 5, we improve both the upper and lower bounds to the Tur\'{a}n number $ex(n, m, \{ C_4, \theta_{3, t}\})$. In Section 6, we mention some concluding remarks regarding applications to arithmetic progression-free sets and make some observations regarding the polynomial we construct in Section 2.

\section{Proof of Main Theorem}
The following proof is inspired by Lin and Wolf \cite[Proposition~3]{LW}. As in their proof, we use the existence of homogeneous polynomials of degree $d$ in $d$ variables over $\mathbb{F}_q$ with only the trivial root. In particular, when $d < q$, if we associate $\F_q^d$ with the field $\F_{q^d}$, then the norm function 
$$
N_d(x) = x^{q^{d-1} + \cdots + q + 1}
$$
yields precisely such a function when interpreted as a polynomial in $d$ variables over $\F_q^d$, see \cite[page 6]{borevich1986number}. 
It's explicit representation as a homogeneous $d$-variate polynomial can be determined by fixing a choice of basis for $\F_{q^d}$ over $\F_q$. 

We make one more observation before beginning the proof. Let $a, b \in \F_{q^d}$, $x \in \F_q$, and $N_d:\F_{q^d} \rightarrow \F_q$ be the norm function, then 
\begin{equation}\label{NormExpand}
N_d(ax + b) = N_d(a)x^d + tr(ba^{q + q^2 + \cdots +q^{d-1}})x^{d-1} + g_{a, b}(x)    
\end{equation}
where $tr$ is the trace functions from $\F_{q^d}$ to $\F_q$ and $g_{a, b}(x)$ is a polynomial of degree at most $d-2$. 

\begin{proof}[Proof of Theorem \ref{mainthm}]
Let $t \leq q-1$. Identify the vector space $\F_q^{\binom{t+1}{2}}$ with $V = \F_q \times \F_{q^2}\times \cdots \times\F_{q^{t}}$, and so to write $v = (v_1, v_2, \dots, v_{t}) \in V$ means $v_i \in \F_{q^i}$ for each $i$. Now consider the polynomial $P_{t, q}: \F_q^{\binom{t+1}{2}} \mapsto \F_q$ given by
$$
P_{t, q}(v) = N_{t}(v_{t}) + N_{t-1}(v_{t-1}) + \cdots + N_2(v_2) + v_1.
$$
Let $u \in \F_q$, and define 
$$
S_u = \{ v \in V: P_{t, q}(v) = u\}.
$$
\noindent \textbf{Claim}: For any affine line $\ell$ in $V$, $|\ell \cap S_u | \leq t$. 
Suppose for a contradiction that $|\ell \cap S_u| > t$. Parameterize $\ell = \{ ax + b: x\in \F_q \}$ where $a, b \in V$. Then, the polynomial $P_{t, q}(ax + b) - u$
in $x$ has at least $t+1$ roots. Note that by (\ref{NormExpand}), $P_{t, q}(ax + b) - u$ is of degree at most $t$, so if it has more than $t$ roots, then $P_{t, q}(ax + b)$ must be identically 0. Let $a = (a_1, a_2, \dots, a_{t}), b = (b_1, b_2, \dots, b_t)$, and $i \in [t] := \{ 1, 2, \dots, t \}$, be the largest index for which $a_i \neq 0$. This implies that
$$
P_{t, q}(ax + b) - u = N_t(a_tx + b_t) + N_{t-1}(a_{t-1}x + b_{t-1}) + \cdots N_2(a_2x + b_2) + a_1x + b_1-u
$$
has degree $i$ and the coefficient of $x^i$ is precisely $N_i(a_i) \neq 0$. However, if this was the case, it would contradict the fact that $P_{t, q}(ax + b) - u$ is identically 0. Thus $a_i =0$ for each $i$ and so $a = (0, \dots, 0)$, which implies that $\ell = \{ b\}$, a contradiction. Thus, $|\ell \cap S_u| \leq t$ and so $S_u$ is $t$-line evasive and may also be viewed as a $(t,1)$-set in $\PG(\binom{t + 1}{2}, q)$.

Furthermore, we note that $|S_u| = q^{\binom{t+1}{2}- 1}$ since for any choice of $v_2, v_2, \dots, v_{r}$, there exists a unique $v_1$ for which $P_{t, q}((v_1, \dots, v_{t})) = u$.

When $n > \binom{t+1}{2}$, let $k$ be the integer for which the inequality $\binom{t+1}{2}(k-1) <n \leq \binom{t+1}{2}k$ holds. The proof above implies that the set of solutions to $P_{t, q^k}(v) = u$ is $t$-line evasive in $\AG(\binom{t+1}{2}, q^k)$ for each $x \in \F_{q^k}$. Thus, we have a partition of the points of $\AG(\binom{t+1}{2}, q^k)$ into $q^k$ parts such that each part is $t$-line evasive. Performing field reduction yields a partition of $\AG(\binom{t+1}{2}k, q)$ into $q^k$ sets, each of which are $t$-line evasive over $\F_q$. Since $\AG(n, q)$ is a subspace of $\AG(\binom{t+1}{2}k, q)$, then the same holds true for $\AG(n, q)$. Hence, $\AG(n, q)$ contains a $t$-line evasive set of order at least $q^{n-k}$. Since $\binom{t+1}{2}(k-1) < n $, then $k < n/ \binom{t+1}{2} + 1$. Hence, 
$$
\mpa{t}{n}{q} \geq q^{n - k} > q^{n - n/\binom{t+1}{2} - 1} = q^{n\left( 1 - \frac{2}{t^2 + t} \right) - 1}.
$$
When $\binom{t+1}{2}$ divides $n$, then it follows that the set has order precisely $q^{n\left( 1 - \frac{2}{t^2 + t} \right)}$
\end{proof}

\section{Maximality of the line evasive sets from Theorem \ref{mainthm}}

In this section, we prove that our construction of $t$-line evasive sets in $\AG(\binom{t+1}{2}, q)$ is maximal over $\F_q$ as well as over any subfield of $\F_q$. This leads to a number of important consequences.

As in Section 2, let $V = \F_q \times \F_{q^2} \times \cdots \times\F_{q^t}$ and 
$$P_{t, q}(v) = N_{t}(v_{t}) + N_{t-1}(v_{t-1}) + \cdots + N_2(v_2) + v_1.$$

\begin{lem}
    Let $S_{x}$ be the set of roots of $P_{t, q}(v) - u$. Then for any $b \in V $ and any polynomial $f(x) \in \F_q[x]$ of degree at most $r$ with constant term $P_{t, q}(b) - u$, there exists a line $\ell = \{ax + b: u \in \F_q \}$ for which 
    $
    P_{t, q}(ax + b) - u = f(x).
    $
\end{lem}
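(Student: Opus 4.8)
The plan is to match coefficients from the top down, exploiting the triangular structure of $P_{t,q}(ax+b)$ together with the surjectivity of each norm map. Write the target polynomial as $f(x) = \sum_{j=0}^{t} c_j x^j$, where by hypothesis $c_0 = P_{t,q}(b) - u$. Evaluating $P_{t,q}(ax+b) - u$ at $x=0$ gives $P_{t,q}(b) - u$ for every direction $a$, so the constant terms already agree, and it remains to realize the coefficients $c_1, \dots, c_t$ by a suitable choice of $a = (a_1, \dots, a_t) \in V$.

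The key observation is that the coefficients of $P_{t,q}(ax+b)$ depend on $a$ in a triangular fashion. Since $N_i(a_i x + b_i)$ has degree at most $i$, it contributes nothing to the coefficient of $x^j$ when $i < j$, and by (\ref{NormExpand}) its contribution to the coefficient of $x^i$ is exactly $N_i(a_i)$. Hence for each $j$ with $2 \le j \le t$, the coefficient of $x^j$ in $P_{t,q}(ax+b)$ has the form
\[
N_j(a_j) + \Phi_j(a_{j+1}, \dots, a_t; b),
\]
where $\Phi_j$ collects the $x^j$-contributions coming from the terms $N_i(a_i x + b_i)$ with $i > j$, and therefore depends only on the higher-index components of $a$ (and on $b$). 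Similarly, the coefficient of $x^1$ equals $a_1 + \Phi_1(a_2, \dots, a_t; b)$.

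I would then determine the components of $a$ in decreasing order of index. First choose $a_t \in \F_{q^t}$ with $N_t(a_t) = c_t$; such an element exists because the norm map $N_t \colon \F_{q^t} \to \F_q$ is surjective, with $N_t(a_t) = 0$ precisely when $a_t = 0$, which handles the case $c_t = 0$ and thus $\deg f < t$. With $a_t$ fixed, $\Phi_{t-1}$ becomes a known element of $\F_q$, so I solve $N_{t-1}(a_{t-1}) = c_{t-1} - \Phi_{t-1}$ for $a_{t-1}$, again using surjectivity of the norm. Iterating down to $j = 2$ fixes $a_2, \dots, a_t$, and finally the coefficient of $x^1$ yields the linear equation $a_1 = c_1 - \Phi_1$, which is solvable directly in $\F_q$. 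The resulting $a$ gives a line $\ell = \{ax + b : x \in \F_q\}$ with $P_{t,q}(ax+b) - u = f(x)$, as required.

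The only substantive point to verify carefully is the triangular structure, namely that the coefficient of $x^j$ isolates $a_j$ through $N_j(a_j)$ and otherwise depends only on $a_{j+1}, \dots, a_t$; this is exactly what (\ref{NormExpand}) provides when applied to each summand, since the term $N_j(a_j x + b_j)$ supplies the leading coefficient $N_j(a_j)$ at degree $j$ while all lower-index norms do not reach degree $j$. Once this is in place, surjectivity of the norm map makes each step of the top-down solve immediate, so I do not anticipate any genuine obstacle beyond the bookkeeping of the expansions $\Phi_j$.
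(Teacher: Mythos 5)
Your proposal is correct and follows essentially the same route as the paper: the same top-down triangular solve, matching the coefficient of $x^j$ as $N_j(a_j)$ plus a term depending only on $a_{j+1},\dots,a_t$ and $b$, and invoking surjectivity of each norm map (your $\Phi_j$ is the paper's $g_j$). The only detail worth noting is that when $c_1=\cdots=c_t=0$ the solve forces $a=0$ (a degenerate line), a case the paper flags explicitly but which does not affect the lemma's use.
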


\begin{proof}
Let $b \in V\setminus S_u$ be a fixed point and $a = (a_1, a_2, \dots, a_t)\in V$ be a variable slope. We will show that there is at least one slope $a$ for which $P_{t, q} - u$ restricted to the line $\ell = \{ax + b : x \in \F_q\}$ is equal to $f(x)$. Observe that it follows from (\ref{NormExpand}) that in the expansion of $P_{t, q}(ax + b)$,  the coefficient of $x^i$ is of the form 
    $$
    N_i(a_i) + g_i(a_{t}, \dots, a_{i + 1}, b_{t}, \dots, b_{i+1})
    $$
    where we emphasize that:
    \begin{enumerate}
        \item $g_i$ is independent of $a_i$.
        \item If $a_{i + 1} = \cdots =a_t = 0$, then $g_i = 0$.
        \item $N_i(a_i) = 0$ if and only if $a_i = 0$.
    \end{enumerate}
    Hence, we have that 
    \begin{equation}\label{Pexpand}
    P_{t, q}(ax + b)-u = N_t(a_t)x^t + \cdots + (N_i(a_i) + g_i(a_{t}, \dots, a_{i + 1}, b_{t}, \dots ,b_{i+1})x^i + \cdots + P_{t, q}(b) - u
    \end{equation}
    Now suppose $f(x) = c_dx^d + \cdots +c_1x + P_{t, q}(b) - u$. If $d < t$, set $c_t = \cdots = c_{d+1} = 0$. So we obtain the following system of equations 
    \begin{align*}
        N_t(a_t) &= c_t \\
        N_{t-1}(a_{t-1}) &= c_{t-1} - g_{t-1}(a_t, b_t) \\
        & \vdots  \\
        a_1 & = c_1 - g_1(a_t, \dots, a_2, b_t, \dots, b_2)
    \end{align*}
    which has at least one non-zero solution $(a_1, a_2, \dots, a_t)$ and usually many. Observe that $(a_1, a_2, \dots, a_t ) = (0, \dots, 0)$ is a solution if and only if $c_1 = c_2 \cdots  = c_t = 0$. Furthermore, the existence of a non-zero solution follows from the fact that $N_i(a_i)$ is surjective, as well as the fact that the system is triangular (i.e. the new variable which appears on the left-hand side in each line does not appear the equations preceding it). Hence, there is at least one slope $(a_1, \dots, a_t)$ for which $P_{t, q}(ax + b) - u = f(x)$.
\end{proof}

It is an immediate corollary that the set of roots $S_u$ of $P_{t, q}(v) - u$ is a maximal $t$-line evasive set not only in $AG(\binom{t+1}{2}, q)$, but also over any subfield of $\F_q$. Let $\F_{q'}$ be a subfield of $\F_q$, we call a set of points in $\AG(n, q)$ which form an affine line over $\F_{q}'$, an $\F_q'$-line.

\begin{thm}\label{maxii}
    Let $q$ and $q'$ be prime powers satisfying $q = (q')^k$, and $t$ be a positive integer satisfying $t < q'$. The set of roots $S_u$ of $P_{t, q}(v)-u$ form a maximal $t$-line evasive set in $\AG(k\binom{t+1}{2}, q')$.
\end{thm}

\begin{proof}
Let $b \in V\setminus S_u$, and therefore $P_{t, q}(b) - u \neq 0$. Choose $x_1, x_2, \dots, x_t \in \F_{q'}^*$ to be distinct elements and set $f(x) = c(x-x_1)\cdots(x - x_t)$ where $c \in \F_q$ satisfies $cx_1x_2\cdots x_t = P_{t, q}(b) - u$. By Lemma \ref{Pexpand}, there exists a line $\ell = \{ ax + b : x \in \F_q\}$ for which $P_{t, q}(ax + b) = f(x)$. Hence, there is an $\F_{q'}$-line in $\AG(\binom{t+1}{2}, q)$ which contains the point $b$ and $t$ points of $S_u$. So $S_u$ is a maximal $t$-line evasive set over $\F_{q'}$.
\end{proof}

\section{Improved Vector Space Ramsey Number Bounds}

It is an immediate corollary of the proof of Theorem \ref{mainthm} that $\AG(\binom{q}{2}, q)$ may be partitioned into $q$ parts, such that no part contains an affine line. Clearly this implies that for any $n \leq \binom{q}{2}$, there is a coloring of the points of $\AG(n, q)$ with at most $q$ colors such that no line in $\AG(n,q)$ is monochromatic.  The following lemma follows as a result.

\begin{lem}\label{lem:chromatic}
    Let $q$ be any prime power. Then 
    $$
    \chi_q\left(\binom{q}{2} + 1\right) \leq q \hs \text{ and } \hs R_q(2;q) \geq  \binom{q}{2} .
    $$
\end{lem}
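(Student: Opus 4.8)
The plan is to derive Lemma~\ref{lem:chromatic} directly from the partitioning statement established in the proof of Theorem~\ref{mainthm}, specialized to the case $t = q-1$. Observe that when $t = q-1$, a $t$-line evasive set has the property that every affine line, which contains exactly $q$ points, meets the set in at most $q-1$ points; equivalently, no affine line is entirely contained in such a set. The proof of Theorem~\ref{mainthm} with $t = q-1$ produces the polynomial $P_{q-1,q}$ on $V = \F_q \times \F_{q^2} \times \cdots \times \F_{q^{q-1}} \cong \F_q^{\binom{q}{2}}$, and the level sets $S_u = \{v : P_{q-1,q}(v) = u\}$ for $u \in \F_q$ partition $\AG(\binom{q}{2}, q)$ into $q$ parts, each of which is $(q-1)$-line evasive.

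First I would record that this partition is exactly a proper coloring of the points of $\AG(\binom{q}{2}, q)$ with $q$ colors in the sense required by the chromatic number: assigning to each point $v$ the color $P_{q-1,q}(v) \in \F_q$, no affine line is monochromatic, since a monochromatic line would be a line fully inside some $S_u$, contradicting that $S_u$ meets every line in at most $q-1 < q$ points. Next, to pass from the affine statement to the projective chromatic number $\chi_q$, I would invoke the embedding of $\AG(\binom{q}{2}, q)$ into $\PG(\binom{q}{2}, q)$ as the complement of a hyperplane at infinity, noting that $\PG(\binom{q}{2}, q)$ has projective dimension $\binom{q}{2}$, hence corresponds to the vector space $\F_q^{\binom{q}{2}+1}$ in the indexing convention used for $\chi_q$. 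The points of this projective space split into the affine points (colored as above) together with the points at infinity; the remaining task is to color the hyperplane at infinity, which is a copy of $\PG(\binom{q}{2}-1, q)$, using the same palette of $q$ colors without creating a monochromatic line, and to check that no line through the hyperplane together with affine points becomes monochromatic.

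The cleanest route, and the one I would expect the authors to take, is to bypass the explicit treatment of the hyperplane at infinity by appealing directly to the recursion~(\ref{Recursion}) together with an inductive or small-base-case argument, or else to observe that the affine coloring already extends to a projective coloring because every projective line not contained in the hyperplane at infinity meets $\AG(\binom{q}{2},q)$ in a full affine line, which is non-monochromatic by construction, so it suffices to color the lower-dimensional space at infinity, and this is handled by recursion on dimension. Concretely, I would combine the base fact $\chi_q(\binom{q}{2}+1) \le q$ (coloring the top affine chunk) with~(\ref{Recursion}) applied to the hyperplane at infinity to absorb its contribution without increasing the color count beyond $q$, using that $\chi_q$ of a small space is at most $q$. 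The main obstacle is precisely this bookkeeping at infinity: ensuring that the projective dimension count is correct (so that $\binom{q}{2}+1$ really is the right argument of $\chi_q$) and that lines lying in the hyperplane at infinity, as well as lines meeting it in a single point, are all non-monochromatic under the extended coloring.

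Finally, the second assertion $R_q(2;q) \ge \binom{q}{2}$ follows immediately and formally from the first via the equivalence~(\ref{Equiv}): since $\chi_q(\binom{q}{2}+1) \le q$, taking $k = q$ and $n = \binom{q}{2}$ in the statement $R_q(2;k) > n \iff \chi_q(n) \le k$ yields $R_q(2;q) > \binom{q}{2} - 1$, hence $R_q(2;q) \ge \binom{q}{2}$. I expect this last step to be a one-line deduction requiring no further work beyond correctly matching the dimension shift between the projective space $\PG(n-1,q)$ appearing in the definition of $R_q$ and the argument of $\chi_q$.
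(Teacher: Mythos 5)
Your proposal follows essentially the same route as the paper: take the partition of $\AG(\binom{q}{2},q)$ into the $q$ level sets of $P_{q-1,q}$ (the case $t=q-1$ of Theorem~\ref{mainthm}), so that no affine line is monochromatic; pass to the projective closure; observe that every projective line not contained in the hyperplane at infinity contains a full affine line and is therefore automatically non-monochromatic; and then color the hyperplane at infinity recursively, dimension by dimension, reusing the same $q$ colors, with each affine chunk colored by restricting the top coloring to a subspace. Your final deduction of $R_q(2;q)\geq \binom{q}{2}$ from (\ref{Equiv}) likewise matches the paper (and is, as in the paper, slightly lossy, since (\ref{Equiv}) applied with $n=\binom{q}{2}+1$ in fact gives $R_q(2;q)>\binom{q}{2}+1$).

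One caveat: your ``concrete'' plan of handling the hyperplane at infinity by applying the recursion (\ref{Recursion}) so as to ``absorb its contribution without increasing the color count beyond $q$'' does not work as stated. The inequality $\chi_q(n)\leq \chi_q(n-d)+\chi_q(d)$ \emph{adds} the color counts of the two pieces (the two colorings use disjoint palettes), so invoking it here would yield a bound on the order of $2q$ rather than $q$; moreover, that sentence presupposes the base fact $\chi_q(\binom{q}{2}+1)\leq q$, which is exactly what is to be proved. The correct mechanism --- which you do state in your alternative reading, and which is the paper's argument --- is palette \emph{reuse}: because every line meeting the affine part meets it in a full affine line, the affine coloring places no constraint whatsoever on how the hyperplane at infinity is colored, so the same $q$ colors can be recycled at every level of the recursion on dimension, bottoming out after $\binom{q}{2}$ steps. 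With that reading fixed, your argument coincides with the paper's proof.
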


\begin{proof}
    Let $n = \binom{q}{2}$. Color the points of $\AG(n, q)$ with $q$ colors in such a way that no affine line is monochromatic. Consider now the projective closure of $\AG(n, q)$, which adds an extra point on each line of $\AG(n, q)$, and this new set of points defines a hyperplane $\mathcal{H}$ of $\PG(n, q)$. Since the affine points satisfy the property that no affine line is monochromatic, then if we complete the coloring of $\PG(n, q)$ by coloring the points of $\mathcal{H}$, the only lines that can be monochromatic in the resultant coloring are those entirely contained in $\mathcal{H}$. In particular, the $q$-coloring of the affine part we begin with gives no restriction on how we may color the points of $\mathcal{H}$. We then repeat this coloring recursively: Take the affine part of $\mathcal{H}$, and color it with the same $q$ colors so that no affine line is entirely monochromatic. Such a coloring can easily be obtained by inducing the coloring of $\AG(\binom{q}{2}, q)$ onto a hyperplane in the affine space. As we mentioned, no line of $\PG(n, q)$ which is not entirely contained in $\mathcal{H}$ can be monochromatic. Likewise, in this coloring of affine part of $\mathcal{H}$, no affine line is monochromatic, so we are again free to color the points of the codimension-2 space as we wish. We repeat until we have colored all the points of $\mathcal{H}$. Thus, we have a coloring of $\PG(\binom{q}{2}, q)$ with $q$ colors in which no line is monochromatic. 
\end{proof}

\begin{proof}[Proof of Theorem \ref{ChromNum}]
When $n \leq \binom{q}{2}$, Lemma~\ref{lem:chromatic} implies that $\chi_q(n) \leq q$. Assume then that $n > \binom{q}{2}$. We apply the recursion formula (\ref{Recursion}) established in \cite{BishnoiCamesRavi2025}, which yields
$$
\chi_q(n) \leq \chi_q\left(\binom{q}{2} + 1\right)\left \lceil \frac{n+1}{\binom{q}{2} + 1} \right\rceil < q\left( \frac{n+1}{\binom{q}{2} + 1}  + 1\right) < \frac{2n}{q-1} + q
$$
where the last inequality follows from the fact that $\binom{q}{2} < n$.

Suppose now that $n \leq \binom{q/2}{2}$. Let $s$ be the integer satisfying $\binom{s-1}{2}< n \leq \binom{s}{2}$, and so $s \leq q/2$. Then, considering the proof of Theorem \ref{mainthm}, we may color all points of $\AG(n, q)$ using $q$ colors in such a way that any affine line contains at most $s-1$ points of any one color.
Applying the argument of Lemma~\ref{lem:chromatic}, it follows that we may color the points of $\PG(n,q)$ with $q$ colors so that any projective line contains at most $s$ points of any color.

 Let $j$ be the largest integer satisfying $\frac{q + 1}{s} > j$. Then, we may combine any $j$ color classes into one color class, where the new combined color class satisfies the property that any line of $\PG(n, q)$ intersects it in at most $sj < q+1$ points. Thus, we can partition the $q$ color classes into parts which each contain $j$ color classes (except for the last part, which may contain less than $j$ color classes). No line is contained entirely in any one color class, therefore
 $$
 \chi_q(n + 1) \leq \left \lceil \frac{q}{j} \right\rceil < \frac{q}{j} + 1.
 $$
 Since $j$ is the largest integer satisfying $j < \frac{q + 1}{s}$, then $\frac{q + 1}{s} - 1 \leq j$. Thus,
 $$
 \chi_q(n + 1) < \frac{q}{j} + 1 \leq \frac{qs}{q + 1 - s}+1.
 $$
 Observe that the inequality $s \leq q/2$ implies that $\frac{qs}{q+1 - s} \leq 2s$. Finally, from $\binom{s-1}{2} < n$, we gather that 
 $$
 t \leq \frac{\sqrt{8n + 1} + 3}{2}.
 $$
 Thus, we obtain
 $$
 \chi_q(n+1) \leq 2t + 1 \leq \sqrt{8(n + 1) - 7} + 4 < \sqrt{8(n + 1)} + 4.
 $$
 Recalling that $R_q(2;k) > n \iff   \chi_q(n) \leq k$, it follows that 
 $$
 R_q(2; k) \geq \frac{(k-4)^2}{8}.
 $$
\end{proof}

\section{Improved bounds for Tur\'{a}n numbers}

In this section, we briefly discuss an application of our construction to the Tur\'{a}n number $ex(n, n^{a}, \{C_4, \theta_{3, t}\})$. Recall that 
$ex(n, m, \mathcal{F})$ is the largest number of edges in a bipartite graph $\Gamma$ with part sizes $m$ and $n$ such that no graph in $\mathcal{F}$ is contained in $\Gamma$ as a subgraph. 

The best-known upper bounds on $ex(n, m, \{\theta_{3, t} \})$ follow from Jiang, Ma,
and Yepremyan~\cite{JiangMaYepremyan2022} who proved that 
$$
ex(m, n, \{ \theta_{3, t} \}) \leq 144t^3((mn)^{\frac{2}{3}} + m + n).
$$
To our knowledge, no upper bounds which improve upon the above bound have been given in the literature on the size of $\{ C_4, \theta_{3, t}\}$-free graphs. For $t = 2$, where $\theta_{3, 2} = C_6$, a similar bound indeed follows from \cite{deCaenSzekely1997, Neuwirth}. We adapt the proof of De Caen and Sz\'ekely \cite{deCaenSzekely1997} to obtain the following.

\begin{thm}
\label{thm:TuranUpper}
    Let $n\geq m$. Then 
    $$
    ex(n, m, \{ C_4, \theta_{3, t} \}) \leq \sqrt[3]{t-1}(mn)^{2/3} + (1+o(1))tn^{3/4}m^{1/2} + m\sqrt[3]{n}+n.
    $$
\end{thm}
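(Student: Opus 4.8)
The plan is to prove this by a double-counting argument on paths of length 2 (cherries) emanating from the $m$-side, closely following the structure of De Caen and Székely. Let $\Gamma$ be a bipartite graph with parts $A$ (of size $m$) and $B$ (of size $n$) that is both $C_4$-free and $\theta_{3,t}$-free, and let $e = e(\Gamma)$ be its number of edges. I would write $d_a$ for the degree of a vertex $a \in A$ and $d_b$ for the degree of $b \in B$, so that $\sum_{a} d_a = \sum_b d_b = e$.

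The key combinatorial input is counting \emph{paths of length 3} (or equivalently, configurations that would force a $\theta_{3,t}$). The $C_4$-freeness means that any two vertices in $A$ share at most one common neighbor in $B$, and vice versa; this is the standard hypergraph-like constraint that controls the number of paths of length 2. The $\theta_{3,t}$-freeness is the genuinely new ingredient: a $\theta_{3,t}$ consists of two hub vertices joined by $t$ internally disjoint paths of length 3. I would translate ``no $\theta_{3,t}$'' into a bound saying that any two vertices (say both in $B$) are connected by \emph{fewer than $t$} internally disjoint paths of length 3 through distinct intermediate $A$-vertices. Counting the number of paths of length 3 in two ways — first summing $\sum_a \binom{d_a}{2}$-type quantities weighted appropriately, and then bounding how these paths can concentrate between pairs of endpoints using the $\theta_{3,t}$ constraint — should yield the leading term $\sqrt[3]{t-1}(mn)^{2/3}$ via a convexity/Jensen estimate on the degree sequence.

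More concretely, the steps in order are: (1) use $C_4$-freeness to set up the count of length-2 paths and derive that $\sum_b \binom{d_b}{2} \le \binom{m}{2}$ and symmetrically $\sum_a \binom{d_a}{2} \le \binom{n}{2}$; (2) count length-3 paths $P_3$ between pairs in $B$, where the $\theta_{3,t}$ bound caps the multiplicity of such paths between any fixed pair by $t-1$, giving an inequality of the form (number of $P_3$'s) $\le (t-1)\binom{n}{2}$; (3) lower-bound the number of $P_3$'s by a convexity argument in terms of $e$, $m$, and $n$ — this is where the exponent $2/3$ and the cube-root constant $\sqrt[3]{t-1}$ emerge; (4) carefully track the lower-order error terms $tn^{3/4}m^{1/2}$, $m\sqrt[3]{n}$, and $n$, which arise from boundary cases (vertices of small degree, or the crude bounds that replace binomial coefficients by their leading monomials). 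The $(1+o(1))$ factor on the second term signals that an asymptotic replacement $\binom{d}{2} \sim d^2/2$ is being made and the discrepancy collected into that term.

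\textbf{The main obstacle} I anticipate is step (3), namely obtaining the precise constant $\sqrt[3]{t-1}$ rather than merely the correct order of magnitude. The convexity inequality must be applied in exactly the right form so that the three-way interaction between $m$, $n$, and $e$ produces the $(mn)^{2/3}$ scaling with the sharp coefficient; a naive application of Jensen's inequality typically loses constant factors. I expect that De Caen and Székely's original argument handles the $C_6 = \theta_{3,2}$ case and that the adaptation to general $t$ enters precisely through replacing the multiplicity bound ``at most one path'' by ``at most $t-1$ paths,'' which rescales the leading constant by $\sqrt[3]{t-1}$. The delicate bookkeeping of the error terms — ensuring that discarding low-degree vertices and rounding the binomial coefficients does not corrupt the leading constant — will require care but should be routine once the main inequality is correctly set up.
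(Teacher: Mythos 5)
Your high-level skeleton does match the paper's: adapt De Caen--Sz\'ekely, use $C_4$-freeness so that distinct length-$3$ paths between a fixed pair of vertices are automatically internally disjoint, let $\theta_{3,t}$-freeness cap their multiplicity at $t-1$, and play the resulting upper count of $3$-paths against a cube-type lower bound. But there are two genuine gaps. The first is a parity error in your step (2): a path of length $3$ in a bipartite graph joins vertices in \emph{opposite} parts, so the multiplicity bound applies to pairs $(x,y) \in M \times N$ and gives at most $(t-1)mn$ paths, not $(t-1)\binom{n}{2}$ between pairs of $B$ (no odd path joins two vertices of the same part). This matters quantitatively: feeding $(t-1)n^2/2$ into the cube relation yields $e \lesssim (t-1)^{1/3} m^{1/3} n$, which is strictly weaker than the claimed $(t-1)^{1/3}(mn)^{2/3}$ whenever $m < n$, so the theorem would not follow from your count.

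The second gap is the one you yourself flagged: step (3) is not a routine Jensen estimate on the degree sequence. The paper obtains the sharp cube via the Atkinson--Watterson--Moran inequality $\sigma(B)^3/(mn) \le \sigma(BB^TB)$ applied to the biadjacency matrix $B$; this inequality counts \emph{walks} of length $3$, and converting walks to paths introduces the correction $\sum_{x}\deg(x)^2 + \sum_y \deg(y)^2 - |E|$ coming from adjacent pairs, where $(BB^TB)_{x,y} \le \deg(x)+\deg(y)-1$ rather than $t-1$. Without further input, $\sum_{x \in M}\deg(x)^2$ can be of order $n^2$, which swamps the main term $(t-1)mn$ in the regime $m=n^a$, $a<1$. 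This is exactly why the paper first splits $M$ at the degree threshold $d = n^{5/8}/m^{1/4}$: the high-degree part $M_1$ is handled by a direct $3$-path count giving $|E_1| \le (t-1)n^2/(d^2-d)+n = (1+o(1))(t-1)n^{3/4}m^{1/2}+n$ --- which is precisely where the second term of the theorem comes from --- while on the low-degree part the correction $\sum_{x\in M_2}\deg(x)^2 < n^{5/4}m_2/m^{1/2}$ becomes lower order and $C_4$-freeness bounds $\sum_y \deg(y)^2 - |E_2| \le m_2^2 - m_2$, producing the $m\sqrt[3]{n}$ term. Note that your plan has no mechanism that would generate the $(1+o(1))tn^{3/4}m^{1/2}$ term at all, and your bookkeeping discards \emph{low}-degree vertices where the paper must isolate \emph{high}-degree ones; supplying the AWM inequality and this degree split (and fixing the parity of the endpoint pairs) would complete the argument along the paper's lines.
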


Plugging in $m = n^a$, we obtain the following:

\begin{cor}
    Let $a\in (\frac{1}{2}, 1)$ be a fixed real number. Then 
    $$
    ex(n, n^a, \{ C_4, \theta_{3, t} \}) \leq (\sqrt[3]{t-1} + o(1))n^{(2 + 2a)/3}.
    $$
\end{cor}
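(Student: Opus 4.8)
The plan is to bound the number of edges $e$ of an extremal $\{C_4,\theta_{3,t}\}$-free bipartite graph $\Gamma$ with parts $A$ ($|A|=m$) and $B$ ($|B|=n$) by double-counting paths of length three, adapting De Caen--Sz\'ekely. The conceptual starting point is a reduction that links the two forbidden subgraphs: since $\Gamma$ is $C_4$-free, any two distinct $3$-edge paths sharing both endpoints must be internally disjoint, because a shared internal vertex would, together with the two endpoints, force a $C_4$ (a common internal $B$-vertex gives two $A$-vertices with two common neighbors, and symmetrically). Consequently, $t$ distinct $3$-paths between a fixed pair $u\in A$, $v\in B$ assemble into a copy of $\theta_{3,t}$. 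Thus $\theta_{3,t}$-freeness is equivalent to the statement that every ordered pair $(u,v)\in A\times B$ is joined by at most $t-1$ paths of length three, which immediately yields $p\le (t-1)mn$, where $p$ denotes the total number of $3$-paths in $\Gamma$.

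The second ingredient is a matching lower bound on $p$ in terms of $e$. Counting each $3$-path by its middle edge gives the exact identity $p=\sum_{xy\in E(\Gamma)}(\deg x-1)(\deg y-1)$, the sum running over edges with $x\in B$, $y\in A$. First I would show that, up to lower-order terms, this quantity is minimized by near-regular graphs, for which it equals $e\bigl(\tfrac{e}{m}-1\bigr)\bigl(\tfrac{e}{n}-1\bigr)\sim e^3/(mn)$. The delicate point is that the naive symmetric convexity estimate (Blakley--Roy applied to the $(m+n)$-vertex graph) only produces the denominator $(m+n)^2$, which is too weak once $m\ll n$; to recover the denominator $mn$ one must argue asymmetrically, exploiting $m\le n$ and treating the two sides separately (morally, a Chebyshev-type step pairing each $B$-vertex's degree with the degree-sum of its neighborhood). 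Combining the two ingredients gives $e^3/(mn)\lesssim (t-1)mn$, hence the leading term $e\le\sqrt[3]{t-1}\,(mn)^{2/3}$.

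The remaining work is bookkeeping to produce the exact error terms, and I expect the lower-order terms to arise from three sources. The $-1$ shifts in $(\deg x-1)(\deg y-1)$ I would control using the $C_4$-free cherry bounds $\sum_{x\in B}\binom{\deg x}{2}\le\binom{m}{2}$ and $\sum_{y\in A}\binom{\deg y}{2}\le\binom{n}{2}$. The vertices of small degree contribute few $3$-paths and are best removed in a preprocessing step, accounting for the additive $m\sqrt[3]{n}+n$. The failure of exact regularity is governed by the second moments of the degree sequences and is expected to produce the $(1+o(1))t\,n^{3/4}m^{1/2}$ term. After assembling these estimates into a single cubic inequality in $e$ and taking cube roots (using $n\ge m$ to simplify), one should obtain $ex(n,m,\{C_4,\theta_{3,t}\})\le\sqrt[3]{t-1}(mn)^{2/3}+(1+o(1))t\,n^{3/4}m^{1/2}+m\sqrt[3]{n}+n$.

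The main obstacle is the asymmetric lower bound on $p$: securing the sharp denominator $mn$ rather than $(m+n)^2$ while simultaneously keeping the corrections small enough to match the stated coefficients. This is the technical heart of the De Caen--Sz\'ekely method, and the care lies in interleaving the convexity step with the $C_4$-free second-moment bounds so that the correction terms stay of the stated order rather than contaminating the leading coefficient $\sqrt[3]{t-1}$.
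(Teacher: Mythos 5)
Your overall route is the same as the paper's: the corollary is just the substitution $m=n^a$ into the edge bound $ex(n,m,\{C_4,\theta_{3,t}\}) \leq \sqrt[3]{t-1}(mn)^{2/3} + (1+o(1))t\,n^{3/4}m^{1/2} + m\sqrt[3]{n} + n$ (here $a>\tfrac12$ makes $n^{3/4+a/2}$ lower order and $a<1$ makes $n^{a+1/3}$ lower order), and you set out to prove that bound by the De Caen--Sz\'ekely double count, exactly as the paper does. Your first ingredient is correct and matches the paper: $C_4$-freeness forces distinct $3$-paths with common endpoints to be internally disjoint, so $\theta_{3,t}$-freeness caps the number of $3$-paths between any pair at $t-1$. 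Also, the asymmetric cube bound with denominator $mn$ that you propose to prove ``Chebyshev-style'' does not need reinventing: it is the Atkinson--Watterson--Moran inequality $\sigma(B)^3/(mn)\le\sigma(BB^TB)$ for the biadjacency matrix, which the paper cites directly; leaving it as a citation would be fine.

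The genuine gap is in your error-term plan, precisely where you wave at ``preprocessing'' and ``failure of exact regularity.'' The walk-versus-path correction contains $\sum_{y\in A}\deg(y)^2$, the second moment on the \emph{small} side, whose degrees can be as large as $n$. Your stated tool for it, the cherry bound $\sum_{y\in A}\binom{\deg y}{2}\le\binom{n}{2}$, only yields $\sum_{y\in A}\deg(y)^2 = O(n^2+e)$; feeding this into $e^3/(mn)\le (t-1)mn+\sum\deg^2+\cdots$ produces an additive term of order $(mn\cdot n^2)^{1/3}=m^{1/3}n=n^{1+a/3}$, which \emph{dominates} the main term $n^{(2+2a)/3}$ for every $a<1$, so your assembled cubic inequality would only give $ex(n,n^a,\{C_4,\theta_{3,t}\})=O(n^{1+a/3})$ --- too weak for the corollary. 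Moreover, your preprocessing goes the wrong way: removing \emph{small}-degree vertices does nothing to control this second moment. The indispensable device, as in the paper, is to split off the \emph{high}-degree vertices of the small side at a threshold $d=n^{5/8}/m^{1/4}$, bound their edges directly via the pairwise $3$-path count (giving $|E_1|\le (t-1)n^2/(d^2-d)+n$), and only then apply the matrix inequality to the remaining graph, where the degree cap gives $\sum_{x}\deg(x)^2\le d^2m$. The $(1+o(1))t\,n^{3/4}m^{1/2}$ term arises from these two threshold-governed contributions, not from a regularity defect, while $m\sqrt[3]{n}$ comes from the cherry bound on the large side; neither is recoverable in your scheme as written, so the leading coefficient $\sqrt[3]{t-1}$ cannot be salvaged without restoring the degree split.
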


\begin{proof}[Proof of Theorem~\ref{thm:TuranUpper}]

Let $G=(M,N,E)$ be a bipartite graph with parts of orders $m = n^a$ and $n$.  Suppose that $G$ is $\{ C_4, \theta_{3, t}\}$-free. We partition $M=M_1\dot\cup M_2$ such that $x\in M_1 \iff deg(x) \geq  d$, and we partition $E=E_1\dot\cup E_2$ accordingly. Note that $G_1=(M_1,N,E_1)$ and $G_2=(M_2,N,E_2)$ are again $\{ C_4, \theta_{3, t}\}$-free. We will bound the number of edges in each graph separately.
    
    Take any pair of vertices in $G_1$. Since $G_1$ is $C_{4}$-free and $\theta_{3, t}$-free, there are at most $(t-1)$ paths of length 3 between them, hence there are at most $(t-1)m_1n$ undirected 3-paths. Now, we lower bound the total number of three paths. For any edge $x \sim y$, with $x \in M_1$ and $y \in N$, there are at least $(d-1)(\deg(y) - 1)$ 3-paths through $x \sim y$. Hence, in total, there are at least
    $$
    \sum_{y \in N}(d-1)\deg(y)(\deg(y)-1)
    $$
    undirected 3-paths. Hence 
    \begin{equation}\label{inequal}
     \sum_{y \in N}(d-1)\deg(y)(\deg(y)-1) \leq (t-1)m_1n.
    \end{equation}
    Now, we observe that 
    $$
    \sum_{y\in N}\deg(y)=|E_1| \geq m_1d \implies \sum_{y \in N}\deg(y)^2 \geq \frac{|E_1|^2}{n}.
    $$
    Thus, from (\ref{inequal}), we have
    $$
    (d-1)\frac{|E_1|^2}{n} - (d-1)|E_1| \leq (t-1)m_1n \leq (t-1)\frac{n}{d}|E_1|\implies |E_1|\leq   (t-1)\frac{n^2}{d^2-d}+n.
    $$
    Let $d = \frac{n^{5/8}}{m^{1/4}}$, then 
    $$
    |E_1| \leq (t-1)n^{3/4}m^{1/2}\left(1+o(1)\right)+n.
    $$

   Now, we count $|E_2|$.  
    Let $B$ be the is $m \times n$ biadjacency matrix of $G_2$. Then, given $x\in M_2$ and $y\in N$, $\sigma(BB^TB)_{x, y}$ is the number of walks of length $3$ between $x$ and $y$. Since $G$ is $C_4$-free, and $\theta_{3, t}$-free, 
    $$
    (BB^TB)_{x, y} \leq \left\{ \begin{array}{cc}
        (t-1), & x \not\sim y \\
        \deg(x) + \deg(y) - 1, & x \sim y .
    \end{array} \right.
    $$
    Now using the following inequality due to Atkinson, Watterson, and Moran \cite{AtkinsonWattersonMoran1959}:
    \begin{equation}\label{Atkinson}
        \frac{\sigma(B)^3}{mn} \leq \sigma(BB^TB),
    \end{equation} 
    we get
    \begin{align*}
    \frac{|E_2|^3}{m_2n}=\frac{\sigma(B)^3}{m_2n}\leq\sigma(BB^TB) \leq& (t-1)(m_2n - |E_2|) + \sum_{x \in M_2}\deg(x)^2+ \sum_{y \in N}\deg(y)^2-|E_2|. 
    \end{align*}

     From the above, we assume that $x \in M_2 \implies \deg(x) < \frac{n^{5/8}}{m^{1/4}}$. Hence, $\sum_{x \in M_2} \deg(x)^2 < \frac{n^{5/4}m_2}{m^{1/2}}$. On the other hand, note that since $G_2$ is $C_4$-free, then $\sum_{y \in N}\binom{\deg(y)}{2} \leq \binom{m_2}{2}$. Thus,
    \begin{align*}
        \sum_{x \in M_2}\deg(x)^2+ \sum_{y \in N}\deg(y)^2-|E_2| \leq \frac{n^{5/4}m_2}{m^{1/2}} + m_2^2 - m_2.
    \end{align*}
    It follows that  
    $$
    |E| = |E_1| + |E_2| < \sqrt[3]{t-1}(mn)^{2/3} + (1+o(1))tn^{3/4}m^{1/2} + m\sqrt[3]{n}+n.
    $$    
\end{proof}

In \cite{DuzgunRietTaranchuk2025}, the authors observe that any $(t, 1)$-set in $\PG(n, q)$ can be used to a construct a $\{C_4, \theta_{3, t} \}$-free graph via a geometry known as a linear representation of a point set. For completeness, we include the definition here.

\begin{defn}
    Let $S$ be a set of points of $\emph{PG}(n, q)$ and embed $\emph{PG}(n, q)$ as a hyperplane into $\emph{PG}(n+1, q)$. A \emph{linear representation} of $S$ is the geometry whose points are all the points in $\emph{PG}(n+1, q)\setminus \emph{PG}(n, q)$ and the lines are all the lines of $\emph{PG}(n+1, q)$ which intersect $\emph{PG}(n, q)$ in precisely one point, namely a point of $S$. 
\end{defn}

It is not hard to see that given a set of points $S$ in $\PG(n, q)$, the point-line incidence graph of the linear representation of $S$ is a graph with:
\begin{enumerate}
    \item $q^{n+1}$ point vertices, each of degree $|S|$. 
    \item $|S|q^n$ line vertices, each of degree $q$.
\end{enumerate}

 We now show that the upper bound we gave is asymptotically tight for many choices of $a \in [\frac{1}{2} + \epsilon_t, 1)$ where $\epsilon_t = \frac{3}{2t^2 + 2t - 2}$. In particular, we highlight that our construction of line evasive sets allows us to expand the domain of $a$ for which we may attain asymptotically tight bounds. This can be compared with the domain of $a$, which follows from using the line evasive sets of Lin and Wolf. For their sets it can be seen that $a \in [\frac{1}{2} + \epsilon_t, 1)$, where $\epsilon_t = \frac{1}{t+1}$.

\begin{proof}[Proof of Theorem \ref{NoThetas}]
    The upper bound has already been established, so we are left to prove the lower bound. 
    We have shown that $\AG(\binom{t+1}{2}, q)$ can be partitioned into $q$ parts, each satisfying the property that any line intersects each part in at most $t$ points. Consequently, for any $2 \leq j \leq \binom{t+1}{2}$, since $\AG(j, q) \subset \AG(\binom{t+1}{2}, q)$, then $\AG(j, q)$ contains a subset of at least $q^{j-1}$ points which satisfies the same property. We then embed $\AG(j, q)$ into $\PG(j, q)$ and hence obtain a $(t, 1)$-set in $PG(j, q)$ of the same size. As shown in \cite{DuzgunRietTaranchuk2025}, the linear representation of such a set in $\PG(j, q)$ is a bipartite graph with $q^{2j - 1}$ line vertices and  $q^{j+1}$ point vertices, which is $C_4$-free and $\theta_{3, t}$-free. By the fact that for all sufficient large $n$, there exists a prime between $n$ and $n^{0.525}$ \cite{BakerHarmanPintz2001}, the result follows for all sufficiently large $n$. 
\end{proof}

In \cite{DuzgunRietTaranchuk2025}, the authors proved that $ex(n, n^{2/3}, \{C_4, \theta_{3, 4}\}) = \Theta(n^{1 + 1/9})$. As an immediate corollary of Theorem \ref{NoThetas}, we improve upon this. Indeed, plug in $t = 3$ and $j = 5$ into Theorem \ref{NoThetas} to obtain a proof of Corollary \ref{CorTheta}, implying
$$
ex(n, n^{2/3}, \{C_4, \theta_{3, 3}\}) = \Theta(n^{1 + 1/9}).
$$

\section{Concluding Remarks}

Let $p$ be a prime. Since every $t$-term progression $\{a+ib\;|\;i\in[0,t-1]\}$ in $\mathbb{F}_p^n$ is contained in the line $\{a+ib\;|\;i\in\mathbb{F}_p\}$, $\mpa{t-1}{n}{p}$ can also be used as an upper bound for the maximal cardinality of a subset $S\subseteq \mathbb{F}_p^n$ not containing $t$ points in arithmetic progression, denoted by $r_t(\mathbb{F}_p^n)$.

 In a recent work Elsholtz, Hunter, Proske and Sauermann~\cite{EHPS} gave the lower bound $r_3(\mathbb{F}_p^n)\geq (cp)^n$ for some $c>\frac{1}{2}$. This was used to significantly improve the long-standing best lower bounds for progression-free sets over the integers due to Behrend~\cite{Behrend}.
 
For $t=p$ being a prime, we actually have $r_p(\mathbb{F}_p^n) = \mpa{p-1}{n}{p}$. In this case a hypercube of side length $p-1$ gives a trivial bound $r_p(\mathbb{F}_p^{n}) \geq (p-1)^n.$ Frankl, Graham and R\"odl \cite{FGR} gave the bound $r_p(\mathbb{F}_p^{2p}) \geq (p-1)^{2p-1}p.$ In \cite{EFFKPGV} several bounds for lower dimensions are discussed.

 Every $t$-progression-free set $S\subseteq\mathbb{F}_p^{n_1}$ provides an asymptotic lower bound $(|S|^{1/{n_1}}-o(1))^n$ for $r_t(\mathbb{F}_p^n)$ using the product construction $S\times S\times ... \times S$ (see e.g. \cite{Pach22}). The previously best asymptotic bound for sets containing no full line in $\mathbb{F}_p^n$ is derived from the result of Frankl, Graham and R\"odl \cite{FGR}. Theorem \ref{mainthm} gives an improved bound.

\begin{cor}
$$r_t\left(\mathbb{F}_p^{\binom{t}{2}}\right)\geq p^{\binom{t}{2}-1},$$
and in particular, 
$$r_p\left(\mathbb{F}_p^{n}\right)=\left(p+o_p(1)+o_n(1)\right)^n,$$ where $o_p(1)=o(1)$ as a function in $p$ and $o_n(1)=o(1)$ as a function in $n$.
\end{cor}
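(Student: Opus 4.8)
The plan is to deduce both statements directly from the $t$-line evasive sets of Theorem~\ref{mainthm}, using the elementary observation that a $(t-1)$-line evasive set contains no $t$-term arithmetic progression. For the finite-dimensional bound I would apply the construction of Section~2 with the parameter $t-1$ in place of $t$ over the field $\F_p$. This requires $t-1<p$, i.e.\ $t\le p$, which is the only nondegenerate range: a line of $\AG(n,p)$ carries exactly $p$ points, so it admits no $t$-term progression of distinct points once $t>p$ (and then every subset is trivially $t$-progression-free, so the asserted inequality is automatic). The resulting set $S_u\subseteq\F_p^{\binom{t}{2}}$ has size $p^{\binom{t}{2}-1}$ and meets every affine line in at most $t-1$ points. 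Since a $t$-term progression $\{a+ib:0\le i\le t-1\}$ is a set of $t$ collinear points, $S_u$ cannot contain one; hence $S_u$ is $t$-progression-free and $r_t(\F_p^{\binom{t}{2}})\ge|S_u|=p^{\binom{t}{2}-1}$, which is the first inequality.

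For the asymptotic statement I would specialize to $t=p$. Here a $p$-term progression is exactly a full affine line, so being $p$-progression-free coincides with being $(p-1)$-line evasive, giving the identity $r_p(\F_p^n)=\mpa{p-1}{n}{p}$ for every $n$. Feeding $t=p-1$ into Theorem~\ref{mainthm} and noting $(p-1)^2+(p-1)=p(p-1)$, the exponent $1-\tfrac{2}{(p-1)^2+(p-1)}$ simplifies to $1-\tfrac{1}{\binom{p}{2}}$, so
$$\mpa{p-1}{n}{p}=\Omega\!\left(p^{\,n\left(1-\frac{1}{\binom{p}{2}}\right)}\right).$$
Equivalently, this bound is exactly what the standard product construction (as referenced in the remarks) produces when seeded from the $p$-progression-free set of size $p^{\binom{p}{2}-1}$ in $\F_p^{\binom{p}{2}}$ supplied by the first part, since its base per dimension is $\big(p^{\binom{p}{2}-1}\big)^{1/\binom{p}{2}}=p^{\,1-1/\binom{p}{2}}$.

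It remains to convert this into the claimed double-asymptotic form. On the lower side I would take $n$-th roots of $r_p(\F_p^n)=\Omega(p^{\,n(1-1/\binom{p}{2})})$; the suppressed $\Omega$-constant (coming from the additive $-1$ in the exponent of the explicit bound in the proof of Theorem~\ref{mainthm}) contributes only $p^{-O(1/n)}=1-o_n(1)$, so $r_p(\F_p^n)^{1/n}\ge p^{\,1-1/\binom{p}{2}}(1-o_n(1))$. Writing $p^{\,1-1/\binom{p}{2}}=p\,e^{-\ln p/\binom{p}{2}}=p-O\!\big(\tfrac{\ln p}{p}\big)=p-o_p(1)$ exhibits the base as $p+o_p(1)$, while the trivial upper bound $r_p(\F_p^n)\le p^n$ matches it from above. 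Combining the two gives $r_p(\F_p^n)^{1/n}=p+o_p(1)+o_n(1)$, as claimed. I expect the only real care to lie in this last step: keeping the two independent error sources cleanly separated—the $O(\ln p/p)$ gap in the base arising from the exponent $1/\binom{p}{2}$, and the $o_n(1)$ arising from the suppressed constant—so that the combined estimate genuinely has the stated form $(p+o_p(1)+o_n(1))^n$.
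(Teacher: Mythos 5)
Your proposal is correct and follows essentially the same route as the paper, which presents this corollary as an immediate consequence of the preceding remarks: a $(t-1)$-line evasive set contains no nontrivial $t$-term progression, the explicit construction behind Theorem~\ref{mainthm} (with parameter $t-1$, requiring $t\le p$) gives the exact size $p^{\binom{t}{2}-1}$ in dimension $\binom{t}{2}$, and for $t=p$ the identity $r_p(\mathbb{F}_p^n)=\mpa{p-1}{n}{p}$ together with the product construction yields the base $p^{1-1/\binom{p}{2}}=p-o_p(1)$, matched from above by the trivial bound $p^n$. Your explicit bookkeeping of the two error sources (the $O(\log p/p)$ loss in the base and the $o_n(1)$ from the suppressed constant and divisibility) and your handling of the degenerate range $t>p$ are careful additions that the paper leaves implicit, but they do not constitute a different argument.
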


It is not hard to see that the polynomial we use to construct a $t$-line evasive set in $\AG(\binom{t+1}{2}, q)$ is of minimal degree. Indeed, it follows from Chevalley-Warning \cite{Chevalley1935, Warning1936} that any polynomial in $n$ variables and of degree $t$ for which $n > \binom{t+1}{2}$ must contain a full affine line in the set of roots of $P$. This can be observed by writing $P$ as the sum $P_t + P_{t-1} + \cdots +P_1 + P_0$ where $P_i$ is homogenous of degree $i$ in $n$ variables. Then Chevalley-Warning states that the system
\begin{eqnarray*}
    P_1 &= 0 \\
    P_2 &= 0 \\
    &\vdots \\
    P_t &= 0
\end{eqnarray*}
has a common non-zero solution. Suppose first that $P_0 = 0$. Since the polynomials $P_i$ are homogenous, then in fact all scalar multiples of the non-zero solution are also solutions. If $P$ has a non-zero constant term, and a root at $v$, then we consider the translated polynomial $P(x - v)$, whose set of roots is just an additive translation of the roots of $P$. Thus, $P$ contains a full line in its set of roots if and only if $P(x - v)$ does. Our argument above implies that this is indeed the case. It would be quite interesting to find other ``nice" polynomials which yield larger $t$-line evasive sets. 

\subsection*{Acknowledgements}

J.F. was supported by the Austrian Science Fund (FWF) under the projects W1230 and PAT2559123. The authors also thank Anurag Bishnoi, Christian Elsholtz and Leo Storme for their helpful comments.

\begingroup
\setlength{\emergencystretch}{2em}
\printbibliography
\endgroup

\end{document}